\theoremstyle{plain}
\newtheorem{theorem}{Theorem}
\newtheorem{lemma}[theorem]{Lemma}
\newtheorem{cor}[theorem]{Corollary}
\newtheorem{prop}[theorem]{Proposition}
\newtheorem{remark}[theorem]{Remark}
\newtheorem{conj}[theorem]{Conjecture}
\theoremstyle{plain}
\newcommand\blfootnote[1]{%
	\begingroup
	\renewcommand\thefootnote{}\footnote{#1}%
	\addtocounter{footnote}{-1}%
	\endgroup
}
\newcommand{\kftrs}{$(k+1)$-resolving set}
\newcommand{\kftrss}{$(k+1)$-resolving sets}
\newcommand{\kfftrss}{Resolving sets tolerant to $k$ failures}
\newcommand{\kftmd}{$(k+1)$-metric dimension}
\newcommand{\threedim}{3D }
\begin{document}
	
	\title{
		Resolving sets tolerant to failures in three-dimensional grids
	}
	
	\author[1]{Merc\`e Mora\thanks{Partially supported by projects H2020-MSCA-RISE-2016-734922 CONNECT,
			PID2019-104129GB-I00/MCIN/AEI/10.13039/501100011033
			of the Spanish Ministry of Science and Innovation
			and Gen.Cat. DGR2017SGR1336, merce.mora@upc.edu}}
	
	\author[2]{Mar\'ia Jos\'e Souto-Salorio\thanks{Partially supported by project PID2020-113230RB-C21 of the Spanish Ministry of Science and Innovation, maria.souto.salorio@udc.es}}
	
	\author[2]{Ana Dorotea Tarr\'io-Tobar\thanks{ana.dorotea.tarrio.tobar@udc.es}}
	
	\affil[1]{Departament de Matem\`atiques, Universitat Polit\`ecnica de Catalunya, Spain}
	
	\affil[2]{Departamento de Matem\'aticas, Universidade da Coru\~{n}a\\ Spain}

	\date{}
	\maketitle
	
	\blfootnote{\begin{minipage}[l]{0.3\textwidth} \includegraphics[trim=10cm 6cm 10cm 5cm,clip,scale=0.15]{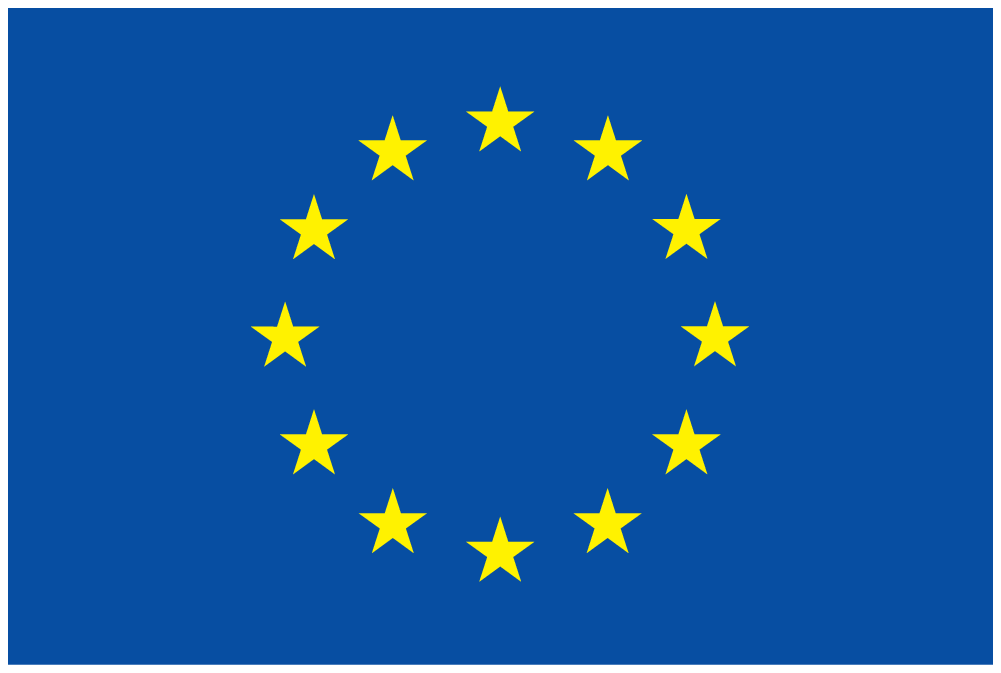} \end{minipage}  \hspace{-2cm} \begin{minipage}[l][1cm]{0.7\textwidth}
			This project has received funding from the European Union's Horizon 2020 research and innovation programme under the Marie Sk\l{}odowska-Curie grant agreement No 734922.
	\end{minipage}}

	\begin{abstract}
	An ordered set $S$ of vertices of a graph $G$ is a \emph{resolving set} for $G$ if every vertex is uniquely determined by its vector of distances to the vertices in $S$. The \emph{metric
	dimension} of G is the minimum cardinality of a resolving set. In this paper we study resolving sets tolerant to several failures in three-dimensional grids. 
Concretely, we seek for minimum cardinality sets that are resolving after removing any $k$ vertices from the set. 	
This is equivalent to finding \emph{$(k+1)$-resolving sets}, a generalization of resolving sets,
where, for every pair of vertices, the vector of distances to the vertices of the set differ in at least $k+1$ coordinates.
This problem  is also related with the study of the  \emph{$(k+1)$-metric dimension} of a graph, defined as the minimum cardinality of a $(k+1)$-resolving set.
In this work, we first prove that the metric dimension of a three-dimensional grid is 3 and establish some properties involving resolving sets in these graphs.
Secondly, we determine the values of $k\ge 1$ for which there exists a 
$(k+1)$-resolving set 
and construct such a resolving set of minimum cardinality in almost all cases.
\vspace{2mm}
	\end{abstract}
	\vspace{.01cm} \hspace{.95cm}{\it Key words:} Resolving set; metric dimension;  $k$-resolving set; 	$k$-metric dimension; fault-tolerant; three-dimensional grid.




\section{Introduction}

Resolving sets can be used to distinguish the vertices of a graph $G$ comparing distances to fixed vertices. Fault-tolerant resolving sets were defined to distinguish the vertices of $G$ even though one of the vertices of the set fails.
Here we consider a more general case, concretely, resolving sets that distinguish the vertices of a graph when any $k$ vertices of the set fail, where $k$ is a fixed integer.

The notion of resolving sets in graphs was defined independently by Harary and Melter~\cite{HM76} and 
Slater~\cite{S75} and have since received a lot of attention due to their applications
in several areas, such as network discovery and verification~\cite{BEEHHMR06}, robot navigation~\cite{KRR96}, chemistry~\cite{CEJO00} or games~\cite{C83}. 
Fault-tolerant resolving sets were introduced in \cite{HMSW} and have been studied in \cite{BGY19,CJS10,JSCS09,KR16,RHP19,SS15,SBMM18,V17}. 
For more applications and properties on metric dimension and its variants, the reader is addressed to the surveys  \cite{KY21,TFL21} and the references herein.

Let $G$ be a simple finite connected graph. For two vertices $u,v \in V(G)$, let $d(u,v)$ denote the length of a shortest path from $u$ to $v$. 
A vertex $u$ of $G$ \emph{resolves} two vertices $x$ and $y$ if $d(u,x)\not= d(u,y)$.
A set of vertices $S\subseteq V(G)$ is a \emph{resolving set} for $G$ if for every pair of different vertices $x$ and $y$ of $G$ there is a vertex $u$ in $S$ that resolves $x$ and $y$.
The \emph{metric dimension} of $G$, denoted by $\dim (G)$, is the minimum cardinality of a resolving set, and a \emph{metric basis} is a resolving set of cardinality $\dim (G)$.
If $S=\{ u_1,\dots ,u_k \}\subseteq V(G)$, we denote by $r(x|S)$ the vector of distances from $x$ to the vertices of $S$, that is, $r(x|S)=(d(x,u_1),\dots ,d(x,u_k))$.
Thus, $S$ is a resolving set if and only if $r(x|S)\not= r(y|S)$ for every pair of distinct vertices $x,y\in V(G)$. The elements of $r(x|S)$ are the \emph{metric coordinates} of $x$ with respect to $S$.
A resolving set $S\subseteq V(G)$ is \emph{fault-tolerant} if $S-\{ u\}$  is a resolving set, for every $u\in S$.

Resolving sets can be used to locate nodes in a network modeled as a graph, and fault-tolerant resolving sets 
can be used to locate nodes even though one of the nodes of the resolving set fails. 
Here we consider the possibility of more than one failure. 
Note that the set obtained after the removal of any $k$ vertices of a resolving set $S$ remains resolving if and only if every pair of vertices of the graph is resolved by at least $k+1$  distinct vertices of $S$. This last concept was introduced in \cite{EMRVY14}, concretely,
 a set $S$ of vertices of a graph is a \emph{$k$-resolving set} if for every pair of vertices $x$ and $y$,  the vectors $r(x|S)$ and $r(y|S)$ have at least $k$ different coordinates.
Hence, a set $S$ remains resolving  even though $k$ vertices fail if and only if $S$ is a $(k+1)$-resolving set.
The $k$-metric dimension and $k$-resolving sets of a graph and, concretely, of some product graphs have been studied in \cite{BGY19,EMRVY14,EMRVY15,EMRVY16cor, EMRVY16lex,KRT21,SVW21,YER-17}.
In the survey \cite{KY21}, an extensive summary of known results and applications of the $k$-metric dimension is given.

It is worth mentioning that, in contrast to resolving sets or fault-tolerant resolving sets, $k$-resolving sets do not always exist for $k\ge 3$ \cite{EMRVY14}.
Whenever a graph $G$ has at least one $k$-resolving set, a $k$-resolving set of minimum cardinality is a \emph{$k$-metric basis}, and its cardinality is the \emph{$k$-metric dimension} of $G$, denoted by $\dim_k(G)$  \cite{EMRVY14}. 
With this terminology,  $1$-resolving sets correspond to resolving sets and $2$-resolving sets correspond to  {fault-tolerant resolving sets}. 

Here we are interested in finding resolving sets tolerant to $k$ failures, that is, $(k+1)$-resolving sets and, more precisely, $(k+1)$-metric bases and the $(k+1)$-metric dimension of three-dimensional grids.
Grid graphs have been proven to be very useful in diverse areas such as
robotics, video games and telecommunications \cite{AAA17,DAAT17,KR16}.
The value of the $k$-metric dimension 
of two-dimensional grids is determined in \cite{BGY19}. 

Since a superset of a resolving set is also a resolving set, the following result  is obvious and provides a necessary and sufficient condition for the existence of a $k$-resolving set.

\begin{remark}\label{prop:nonexistencekftrs}
	A graph $G$ has a $k$-resolving set if and only if $V(G)$ is a $k$-resolving set.
\end{remark}

Next result gives a lower bound on the $k$-metric dimension of a graph, whenever it is defined.

\begin{prop}\label{prop:boundfacil} \cite{EMRVY14}
	If a graph $G$ has a $k$-resolving set, then $\dim_{k} (G)\ge \dim (G)+k-1$.
\end{prop}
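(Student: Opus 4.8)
The plan is to exploit the characterization, already implicit in the discussion preceding the statement, that a $k$-resolving set is precisely a resolving set that remains resolving after the failure of any $k-1$ of its vertices. Concretely, I would take an arbitrary $k$-resolving set $S$ of $G$ and show that deleting any $k-1$ vertices from $S$ still leaves a resolving set. Since every resolving set has cardinality at least $\dim(G)$, the resulting smaller set then has at least $\dim(G)$ vertices, and the bound follows by accounting for the $k-1$ deleted ones.

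In more detail, let $S$ be any $k$-resolving set and let $W\subseteq S$ with $|W|=k-1$. Such a $W$ exists because $|S|\ge k$: resolving a single pair of distinct vertices already requires at least $k$ differing coordinates, hence at least $k$ vertices in $S$. I would then claim that $S\setminus W$ is a resolving set. To see this, fix any pair of distinct vertices $x,y\in V(G)$. By the definition of a $k$-resolving set, the vectors $r(x|S)$ and $r(y|S)$ differ in at least $k$ coordinates; equivalently, at least $k$ vertices of $S$ resolve $x$ and $y$. Removing the $k-1$ vertices of $W$ eliminates at most $k-1$ of these resolving vertices, so at least one vertex of $S\setminus W$ still resolves $x$ and $y$. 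As the pair $x,y$ was arbitrary, $S\setminus W$ resolves every pair of vertices and is therefore a resolving set.

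Consequently $|S\setminus W|\ge \dim(G)$, that is, $|S|-(k-1)\ge \dim(G)$, and so $|S|\ge \dim(G)+k-1$. Applying this inequality to a $k$-metric basis $S$ (which exists by hypothesis), for which $|S|=\dim_k(G)$, yields the desired bound $\dim_k(G)\ge \dim(G)+k-1$.

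I expect no substantial obstacle in this argument: it is essentially a reinterpretation of the failure-tolerance property combined with the minimality of $\dim(G)$. The only points requiring a moment's care are the elementary counting step, namely that deleting $k-1$ vertices can destroy at most $k-1$ of the (at least $k$) vertices resolving a given pair, and the verification that $|S|\ge k$ so that the set $W$ of size $k-1$ can indeed be removed.
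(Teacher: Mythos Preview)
Your argument is correct. The paper itself does not supply a proof of this proposition; it merely quotes the result from \cite{EMRVY14}, so there is nothing to compare against beyond noting that your deletion argument is exactly the standard one underlying this bound.
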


{The paper is organised as follows. 
In Section~\ref{sec:resolvingvertices}, some properties concerning resolving sets for three-dimensional grids are given.  Concretely, we prove that the metric dimension of a three-dimensional grid is exactly 3 and establish lower bounds on the number of vertices resolving a fixed pair of vertices.  These bounds will be very useful in Section~\ref{sec:3Dgrids}, devoted to the study of resolving sets tolerant to $k$ failures in three-dimensional grids. Concretely, we determine the $(k+1)$-metric dimension  and describe $(k+1)$-metric bases of these grids in almost all cases.}




\section{Resolving sets in three-dimensional grids}\label{sec:resolvingvertices}

In this section we calculate the metric dimension of three-dimensional grids and prove some new results concerning resolving sets in these grids. 
We also point out sets of vertices resolving a fixed pair of vertices,  that will be  
very useful in Section~\ref{sec:3Dgrids}.

Formally, an \emph{$r$-dimensional grid}, or \emph{rD grid} for short, is any graph obtained as the cartesian product of $r$ non-trivial paths, that is, $P_{n_1}\Box \cdots \Box P_{n_r}$, with $n_1,\dots, n_r\ge 2$.
We can assume that the set of vertices of $P_{n_1}\Box \cdots \Box P_{n_r}$  is: 

\vspace{-3mm}\begin{equation*}
V(P_{n_1}\Box \cdots \Box P_{n_r})=\{ (x_1,\dots ,x_r) : 0\le  x_i \le n_i-1 \hbox{ for every }i\in \{1,\dots ,r\} \}
\end{equation*}
and two vertices $(x_1,\dots ,x_r)$ and $(y_1,\dots ,y_r)$ of $V(P_{n_1}\Box \cdots \Box P_{n_r})$ are adjacent if and only if $y_i\in \{x_i-1,x_i+1\}$, for some $i\in \{ 1,\dots, r\}$, and $x_j=y_j$, whenever $j\not= i$. 
Hence, the degrees of the vertices  of an $r$D grid are $r, r+1,\dots,2r$, and the distance between two vertices $(x_1,\dots ,x_r)$ and $(y_1,\dots ,y_r)$ in $P_{n_1}\Box \cdots \Box P_{n_r}$ is:

\vspace{-3mm}\begin{equation*}
d( (x_1,\dots ,x_r), (y_1,\dots ,y_r))=\sum_{i=1}^r |y_i-x_i|.
\end{equation*}

There are some vertices that play an important role  in resolving pairs of vertices of a $3$D grid, concretely, vertices with degree different from the maximum degree, 6. 
The vertices of degree $3$ are called \emph{corners}. 
A vertex of degree 6 is an \emph{interior} vertex.
With the specified labeling of the vertices, the set of corners of a 3$D$ grid is  $\{ (x_1,x_2,x_3): x_i\in \{ 0, n_i-1\} \hbox{ for }i\in \{1,2,3 \} \}$. 
A \emph{face} of a \threedim grid  is a set of vertices with a constant coordinate $x_{i_0}$ equal to either $0$ or to $n_{i_0}-1$, for some $i_0\in\{1,2,3\}$.
Let $F(n_1,n_2,n_3)$ denote the set of vertices belonging to any face, that is,  $F(n_1,n_2,n_3)=\{(x_1,x_2,x_3): \, x_i\in \{ 0, n_i-1\} \,\,\hbox{ for some }i\in \{1,2,3\} \,\}$. We write simply $F$ if the values $n_1$, $n_2$ and $n_3$ are clear from context.
Note that $F$ consists of all non-interior vertices and
$ |F(n_1,n_2,n_3)|=n_1n_2n_3-(n_1-2)(n_2-2)(n_3-2)=2(n_1n_2+n_2n_3+n_1n_3)-4(n_1+n_2+n_3)+8.$

It is known that the metric dimension of an $r$D grid is at most $r$~\cite{KRR96}.
Also in this paper, it is said that it is exactly $r$ and the proof is left to the reader. However, in general this is not in true. For example, it is known that the metric dimension of the hypercube, that can be viewed as a $r$D grid with $n_1=\dots =n_r=2$, is less than $r$ for $r\ge 5$ \cite{CHMPPSW07}.
It remains an open problem to determine the exact value of the metric dimension for general grids.
A discussion on asymptotic values of the metric dimension of a grid  is included in \cite{JP19}.
Next, we prove that the result stated in~\cite{KRR96} holds for 3D grids.

%
%

\begin{theorem}\label{thm:basisGrid3D}
	If $n_1,n_2, n_3\ge 2$, then $\dim (P_{n_1}\Box P_{n_2}\Box P_{n_3})=3$. 
	Moreover, a set formed by three corners of a face is a metric basis.	
\end{theorem}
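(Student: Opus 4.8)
The plan is to prove the two inequalities $\dim \le 3$ and $\dim \ge 3$ separately, obtaining the ``moreover'' statement for free from the upper-bound construction.

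For the upper bound I would take any three corners of a single face and show directly that they resolve every vertex. Fix, say, the face $x_3=0$, whose corners are $(0,0,0)$, $(n_1-1,0,0)$, $(0,n_2-1,0)$ and $(n_1-1,n_2-1,0)$, and let $S$ consist of any three of them. Writing the distance from $x=(x_1,x_2,x_3)$ to a corner $(c_1,c_2,0)$ as $|x_1-c_1|+|x_2-c_2|+x_3$, I would observe that among any three of the four corners there is always a pair sharing the second coordinate (and differing in the first) and a pair sharing the first (and differing in the second). Subtracting the two distances in the first pair yields $2x_1-(n_1-1)$, which determines $x_1$; the second pair determines $x_2$ likewise; and then any single distance recovers $x_1+x_2+x_3$, hence $x_3$. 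Thus $r(x\mid S)$ determines $x$, so $S$ is a resolving set of cardinality $3$. The same computation works verbatim for any of the six faces by symmetry, proving $\dim \le 3$ and that three corners of a face form a resolving set.

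For the lower bound I would show that no two vertices resolve the grid. Let $S=\{u,w\}$ with $u\ne w$, and distinguish two cases according to the number of coordinates in which $u$ and $w$ differ. If they differ in at least two coordinates, say coordinates $1$ and $2$, then after applying the reflection automorphisms $x_i\mapsto n_i-1-x_i$ where needed I may assume $u_1<w_1$ and $u_2<w_2$; I would consider $p=(u_1+1,u_2,u_3)$ and $q=(u_1,u_2+1,u_3)$. Both lie in the shortest-path interval (box) between $u$ and $w$, so $d(\cdot,u)+d(\cdot,w)=d(u,w)$ holds at both; since $d(p,u)=d(q,u)=1$ they are equidistant from $u$ and hence also from $w$, yet $p\ne q$, so $S$ does not resolve $p,q$. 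If instead $u$ and $w$ differ in exactly one coordinate, say $u_2=w_2$ and $u_3=w_3$ but $u_1\ne w_1$, I would pass to the slice $x_1=u_1$, a copy of the two-dimensional grid $P_{n_2}\Box P_{n_3}$. Since a connected graph has metric dimension $1$ only if it is a path, and this slice (with $n_2,n_3\ge 2$) is not a path, a single vertex cannot resolve it; thus there are two distinct vertices of the slice equidistant from $(u_1,u_2,u_3)$. On the slice $d(\cdot,w)-d(\cdot,u)=|u_1-w_1|$ is constant, so these two vertices are equidistant from $w$ as well and remain unresolved by $S$. In both cases $S$ is not resolving, giving $\dim\ge 3$.

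Combining the two bounds yields $\dim(P_{n_1}\Box P_{n_2}\Box P_{n_3})=3$, and the resolving set of three corners built for the upper bound has exactly this cardinality, so it is a metric basis. The step I expect to require the most care is the lower bound: one must ensure the witnessing pair always lies inside the grid (this is why the box/slice viewpoint, keeping the constructed vertices between $u$ and $w$ or inside a genuine face, is convenient), and one must treat separately the degenerate case where $u$ and $w$ share two coordinates, in which the interval collapses to a segment and the two-dimensional slice argument is needed instead.
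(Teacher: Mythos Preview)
Your argument is correct, and both halves differ in flavour from the paper's proof.

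For the upper bound, the paper simply cites \cite{KRR96} for the fact that $\{(0,0,0),(n_1-1,0,0),(0,n_2-1,0)\}$ is resolving and then invokes symmetry; your direct linear-algebra computation (recover $x_1$ and $x_2$ by subtraction, then $x_3$) is self-contained and actually proves the ``moreover'' part rather than deducing it from an external reference.

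For the lower bound, the paper uses a structural shortcut: by a result of \cite{KRR96}, every vertex in a two-element metric basis has degree at most $3$, so both vertices must be corners, and then a short case check on the three essentially distinct corner pairs finishes. Your route avoids this lemma entirely and works for \emph{arbitrary} pairs $\{u,w\}$ via the two-case split (differ in $\ge 2$ coordinates: use neighbours of $u$ inside the interval box; differ in exactly one coordinate: reduce to the $2$D slice and use that non-paths have dimension $\ge 2$). This is more elementary and does not depend on the degree restriction, at the cost of a slightly longer argument. One small remark: in Case~1 you only normalised coordinates $1$ and $2$; the third coordinate of $p$ and $q$ is $u_3$, which automatically lies in the interval $[\min\{u_3,w_3\},\max\{u_3,w_3\}]$, so the box containment is indeed fine --- but it is worth saying explicitly, since that is precisely the ``stays inside the grid'' point you flagged as delicate.
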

\begin{proof} We begin by proving that $\dim (P_{n_1}\Box P_{n_2}\Box P_{n_3})\ge 3$. 
  Paths are the only graphs with metric dimension 1~\cite{CEJO00}, thus, it is enough to prove that the metric dimension of $P_{n_1}\Box P_{n_2}\Box P_{n_3}$ is different from $2$. 

 Suppose to the contrary that $\dim (P_{n_1}\Box P_{n_2}\Box P_{n_3})=2$ and $S=\{ u,v \}$ is a resolving set for $G$. Then, the degree of  $u$ and $v$ is at most 3 \cite{KRR96}. Hence, $u$ and $v$ must be corners in $P_{n_1}\Box P_{n_2}\Box P_{n_3}$. We can assume without loss of generality that $u=(0,0,0)$ and $v\in \{ (n_1-1,0,0), (n_1-1,n_2-1,0),  (n_1-1,n_2-1,n_3-1) \}$.
 But, if $v=(n_1-1,0,0)$, then $r((0,1,0)|S)=r((0,0,1)|S)=(1,n_1)$;
 if $v= (n_1-1,n_2-1,0)$, then $r((0,1,0)|S)=r((1,0,0)|S)=(1,n_1+n_2-3)$; and 
 if $v= (n_1-1,n_2-1,n_3-1)$, then $r((0,1,0)|S)=r((1,0,0)|S)=(1,n_1+n_2+n_3-4)$, 
 which is a contradiction.
  
 In \cite{KRR96}, the authors prove that the set $\{ (0,0,0), (n_1-1,0,0), (0,n_2-1,0) \}$ is a resolving set for $P_{n_1}\Box P_{n_2}\Box P_{n_3}$. Hence, $\dim (P_{n_1}\Box P_{n_2}\Box P_{n_3})= 3$. By symmetry, we have that every set formed by three corners of a face is a metric basis.
\end{proof}

The following lemma describes some resolving sets of cardinality 4 for a \threedim grid. 
Notice that similar resolving sets can be given by interchanging the role of the three coordinates.

\begin{lemma}\label{lem:grid3resset4}
	If $h,h'\in \{ 0,\dots , n_3-1 \}$, with $h\not=h'$, $i\in \{ 0, \dots ,n_1-1 \} $ and $j\in \{ 0, \dots ,n_2-1 \} $, then  $S=\{(0,0,h), (n_1-1,0,h), (0,n_2-1,h), (i,j,h')\}$  is a resolving set for $P_{n_1}\Box P_{n_2}\Box P_{n_3}$.
\end{lemma}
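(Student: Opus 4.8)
The plan is to show directly that the vector of distances to $S$ determines each vertex uniquely. For a generic vertex $x=(x_1,x_2,x_3)$, write its four metric coordinates as $a_1,a_2,a_3,a_4$. Using the $\ell_1$ distance formula for grids, the first three are $a_1=x_1+x_2+|x_3-h|$, $a_2=(n_1-1)-x_1+x_2+|x_3-h|$ and $a_3=x_1+(n_2-1)-x_2+|x_3-h|$, while $a_4=|x_1-i|+|x_2-j|+|x_3-h'|$.

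The first observation is that the three points lying in the plane $x_3=h$ recover $x_1$ and $x_2$ exactly, because the common term $|x_3-h|$ cancels in the differences:
\[
a_1-a_2=2x_1-(n_1-1),\qquad a_1-a_3=2x_2-(n_2-1).
\]
Thus $x_1$ and $x_2$ are determined by $a_1,a_2,a_3$, and substituting back into $a_1$ gives $|x_3-h|=a_1-x_1-x_2$.

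With $x_1,x_2$ known and $i,j$ fixed, the fourth coordinate then yields $|x_3-h'|=a_4-|x_1-i|-|x_2-j|$. It remains to recover $x_3$ from the two quantities $|x_3-h|$ and $|x_3-h'|$. Here I would invoke the elementary fact that two distinct points of a path resolve it: if some $x_3'$ produced the same two values as $x_3$, then $x_3'\in\{x_3,2h-x_3\}$ and $x_3'\in\{x_3,2h'-x_3\}$, so $x_3'\neq x_3$ would force $2h-x_3=2h'-x_3$, i.e. $h=h'$, a contradiction. Hence $x_3$ is determined, so $(a_1,a_2,a_3,a_4)$ determines $(x_1,x_2,x_3)$, and therefore $S$ is a resolving set.

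The computation is entirely routine; the only real content is this last step, which is exactly why the fourth point is required to lie in a plane $x_3=h'$ with $h'\neq h$. Were it in the plane $x_3=h$, only $|x_3-h|$ would ever be accessible, and the two vertices $(x_1,x_2,h+t)$ and $(x_1,x_2,h-t)$ would be indistinguishable. Thus the hypothesis $h\neq h'$ is precisely what makes the argument work, and the symmetric versions mentioned after the statement follow by permuting the roles of the three coordinates.
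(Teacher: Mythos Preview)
Your proof is correct and follows essentially the same approach as the paper's: both show that the three points in the plane $x_3=h$ determine $x_1$, $x_2$ and $|x_3-h|$, and that the fourth point then supplies $|x_3-h'|$, with $h\neq h'$ forcing $x_3$ to be unique. The only difference is cosmetic: the paper appeals to Theorem~\ref{thm:basisGrid3D} for the first step, whereas you carry out the coordinate arithmetic directly, making your argument self-contained.
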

\begin{proof} Let $G=P_{n_1}\Box P_{n_2}\Box P_{n_3}$. By  Theorem~\ref{thm:basisGrid3D}, the set $S'=\{(0,0,h), (n_1-1,0,h), (0,n_2-1,h)\}$ is a metric basis for the \threedim grid formed by the vertices $\{ (x_1,x_2,x_3)\in V(G): x_3\le h\}$ and for the \threedim grid formed by the vertices $\{ (x_1,x_2,x_3)\in V(G): x_3\ge h\}$. Hence, if $h\in \{0,n_3-1\}$, then $S$ is a resolving set for $G$. Otherwise, by symmetry, two different vertices $u$ and $v$ of $G$ have the same coordinates with respect to $S$
if and only if $u=(a,b,c_1)$ and $v=(a,b,c_2)$, with $c_1\not=c_2$ and $c_1+c_2=2h$, for some integers $a,b,c_1,c_2$. 
We claim that $u$ and $v$ are resolved by $(i,j,h')$.
Indeed, suppose to the contrary that $(i,j,h')$ does not resolve $u$ and $v$. 
In such a case, since
$d(u,(i,j,h'))=|i-a|+|j-b|+|h'-c_1|$ and $d(v,(i,j,h'))=|i-a|+|j-b|+|h'-c_2|$,
we have $|h'-c_1|=|h'-c_2|$. But this equality is true if and only if $c_1=c_2$ or $c_1+c_2=2h'$.
Hence, $h=h'$, a contradiction.
\end{proof}
Now, we give a sufficient condition for a set not to be resolving in a \threedim grid. First we need some terminology. For every $a_1\in \{1,\dots ,n_1-1\}$ and $a_2\in \{1,\dots ,n_2-1\}$, we define the sets of vertices in $P_{n_1}\Box P_{n_2}$:

\vspace{-4mm}
\begin{align*}
R_{--}(a_1,a_2)&=\{ (x_1,x_2): 0\le x_1< a_1, 0\le x_2< a_2 \},\\
R_{++}(a_1,a_2)&=\{ (x_1,x_2): a_1\le x_1<n_1, a_2\le x_2<n_2 \},\\
R_{-+}(a_1,a_2)&=\{ (x_1,x_2): 0\le x_1< a_1, a_2\le x_2<n_2 \},\\
R_{+-}(a_1,a_2)&=\{ (x_1,x_2): a_1\le x_1<n_1, 0\le x_2< a_2 \}.
\end{align*} 
Notice that  these sets form a partition of the set of vertices of the $2$D grid $P_{n_1}\Box P_{n_2}$
 (see Figure~\ref{fig:regiones}).

\begin{figure}[t!]
	\centering
	\includegraphics[width=0.5\textwidth]{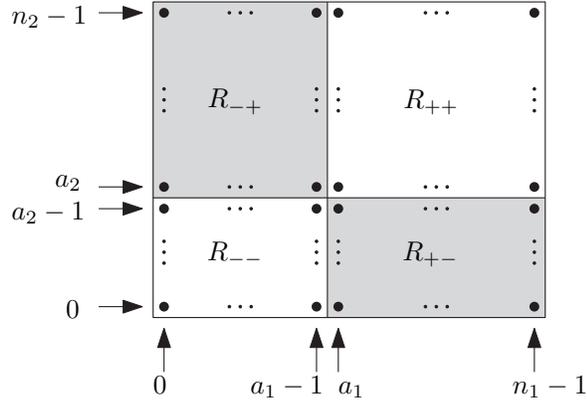}
	\caption{The sets $R_{--}(a_1,a_2)$, $R_{++}(a_1,a_2)$,
	$R_{-+}(a_1,a_2)$ and $R_{+-}(a_1,a_2)$. 
}
	\label{fig:regiones}
\end{figure}

Let $\textrm{pr}_3(S)$ be the projection of $S$ onto $P_{n_1}\Box P_{n_2}$, that is, 
$\textrm{pr}_3(S)$ contains all the vertices of the $2$D grid $P_{n_1}\Box P_{n_2}$ 
obtained by deleting the third coordinate from the vertices of $S$. 
Analogously, 
the projection $\textrm{pr}_{i}(S)$, $i\in \{ 1,2 \}$,  
contains all the vertices of the $2$D grid obtained by deleting the $i$-th coordinate from the vertices of $S$.

\begin{lemma}\label{lem:regiones3D}
	If there exist $a_1\in \{1,\dots ,n_1-1\}$ and $a_2\in \{1,\dots ,n_2-1\}$ such that
		$\textrm{pr}_3(S)\subseteq R_{--}(a_1,a_2)\cup R_{++}(a_1,a_2)$ or $\textrm{pr}_3(S)\subseteq R_{-+}(a_1,a_2)\cup R_{+-}(a_1,a_2)$,
        then $S$ is not a resolving set for $P_{n_1}\Box P_{n_2}\Box P_{n_3}$.
\end{lemma}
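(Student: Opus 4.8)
The plan is to reduce the problem to a two-dimensional distinguishing argument. The key observation is that if two vertices $x=(x_1,x_2,c)$ and $y=(y_1,y_2,c)$ of $P_{n_1}\Box P_{n_2}\Box P_{n_3}$ share the same third coordinate $c$, then for any $u=(u_1,u_2,u_3)\in S$ we have $d(u,x)=d(\textrm{pr}_3(u),(x_1,x_2))+|u_3-c|$ and $d(u,y)=d(\textrm{pr}_3(u),(y_1,y_2))+|u_3-c|$, where the distances on the right are taken in the $2$D grid $P_{n_1}\Box P_{n_2}$. Hence $u$ resolves $x$ and $y$ if and only if $\textrm{pr}_3(u)$ resolves $(x_1,x_2)$ and $(y_1,y_2)$ in $P_{n_1}\Box P_{n_2}$. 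So it suffices to exhibit two distinct vertices of the $2$D grid that are not resolved by any vertex of $\textrm{pr}_3(S)$; lifting them to a common third coordinate (say $c=0$) then yields a pair of distinct $3$D vertices with identical distance vectors with respect to $S$, proving that $S$ is not resolving.

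First I would treat the case $\textrm{pr}_3(S)\subseteq R_{--}(a_1,a_2)\cup R_{++}(a_1,a_2)$. Here I propose the pair $x=(a_1-1,a_2)$ and $y=(a_1,a_2-1)$, the two ``off-diagonal'' lattice points surrounding the split; the hypotheses $a_1\in\{1,\dots,n_1-1\}$ and $a_2\in\{1,\dots,n_2-1\}$ guarantee that both are valid and distinct vertices of $P_{n_1}\Box P_{n_2}$. The claim is that every $u=(u_1,u_2)$ in $R_{--}\cup R_{++}$ satisfies $d(u,x)=d(u,y)$. For $u\in R_{--}$ one has $u_1\le a_1-1$ and $u_2\le a_2-1$, so both distances equal $(a_1-u_1)+(a_2-u_2)-1$; for $u\in R_{++}$ one has $u_1\ge a_1$ and $u_2\ge a_2$, so both equal $(u_1-a_1)+(u_2-a_2)+1$. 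In either region the absolute values $|u_i-x_i|$ and $|u_i-y_i|$ open up with a fixed sign, and the $\pm 1$ contributions from the two coordinates cancel.

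The symmetric case $\textrm{pr}_3(S)\subseteq R_{-+}(a_1,a_2)\cup R_{+-}(a_1,a_2)$ is handled the same way, now choosing the ``diagonal'' pair $x=(a_1-1,a_2-1)$ and $y=(a_1,a_2)$. For $u\in R_{-+}$, where $u_1\le a_1-1$ and $u_2\ge a_2$, both distances equal $(a_1-u_1)+(u_2-a_2)$; and for $u\in R_{+-}$, where $u_1\ge a_1$ and $u_2\le a_2-1$, both equal $(u_1-a_1)+(a_2-u_2)$. Again the sign pattern is fixed on each of the two admissible quadrants, so the offsets cancel and $x,y$ are unresolved by $\textrm{pr}_3(S)$.

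I expect the only delicate point to be the bookkeeping of signs when expanding the absolute values: the whole argument rests on the fact that, within each of the two admissible quadrants, \emph{both} coordinates of $u$ lie strictly on one side of the split, so every absolute value resolves to a linear expression with a predetermined sign, and the two vertices of the chosen pair differ from $u$ by exactly compensating $\pm 1$ shifts in the two coordinates. Once this sign analysis is carried out on each region, the equalities $d(u,x)=d(u,y)$ are immediate, and the reduction from the first paragraph then finishes the proof.
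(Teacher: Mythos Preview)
Your proof is correct and follows essentially the same approach as the paper: you exhibit exactly the same unresolved pairs, namely $(a_1-1,a_2,0)$ and $(a_1,a_2-1,0)$ in the first case and $(a_1-1,a_2-1,0)$ and $(a_1,a_2,0)$ in the second. The paper simply states these pairs and leaves the distance verification implicit, whereas you spell out the reduction to the $2$D grid and the sign bookkeeping; but the underlying argument is identical.
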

\begin{proof}
	If $\textrm{pr}_3(S)\subseteq R_{--}(a_1,a_2)\cup R_{++}(a_1,a_2)$, then the vertices $(a_1-1,a_2,0)$  and $(a_1, a_2-1,0)$ are not resolved by $S$.  Anagolously, if $\textrm{pr}_3(S)\subseteq R_{-+}(a_1,a_2)\cup R_{+-}(a_1,a_2)$, then the vertices $(a_1-1,a_2-1,0)$  and $(a_1, a_2,0)$ are not resolved by $S$ (see Figure~\ref{fig:regiones3D}). 
\end{proof}

Note that, by symmetry, the preceding result can also be stated  for $\textrm{pr}_{1}(S)$  and $\textrm{pr}_{2}(S)$.

\begin{figure}[t!]
	\centering
	\includegraphics[width=0.6\textwidth]{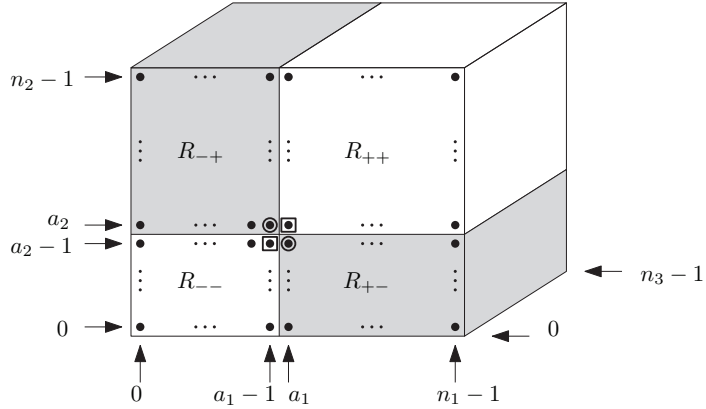}
	\caption{If $\textrm{pr}_3(S)\subseteq R_{--}(a_1,a_2)\cup R_{++}(a_1,a_2)$, then $S$ is included in the white region and circled vertices are not resolved by any vertex of $S$. Analogously, if
		$\textrm{pr}_3(S)\subseteq R_{-+}(a_1,a_2)\cup R_{+-}(a_1,a_2)$, then $S$ is included in the gray region and squared vertices are not resolved by any vertex of $S$.}
	\label{fig:regiones3D}
\end{figure}

Now, we prove some properties for general graphs that provide vertices resolving a fixed pair of vertices under certain conditions.

\begin{lemma}\label{prop:shortestpath}
{Let $u$ and $v$ be vertices of a graph $G$. If $d(u,v)$ is odd, then every vertex of a shortest path between $u$ and $v$ resolves $u$ and $v$. If $d(u,v)$ is even, then all but one of the vertices of a shortest path between $u$ and $v$ resolve $u$ and $v$. }
\end{lemma}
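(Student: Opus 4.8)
The plan is to exploit the additivity of distances along a shortest path. Let $L=d(u,v)$ and let $u=w_0,w_1,\dots,w_L=v$ be the consecutive vertices of a fixed shortest $u$--$v$ path. The first step is the standard observation that for each index $t\in\{0,\dots,L\}$ one has $d(u,w_t)=t$ and $d(w_t,v)=L-t$. Indeed, $d(u,w_t)\le t$ and $d(w_t,v)\le L-t$ because the path restricts to walks of those lengths, while $d(u,w_t)+d(w_t,v)\ge d(u,v)=L$ by the triangle inequality; combining these two inequalities forces equality in both, so $d(u,w_t)=t$ and $d(w_t,v)=L-t$.

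With this in hand, the second step is a one-line criterion: the vertex $w_t$ resolves $u$ and $v$ precisely when $d(w_t,u)\ne d(w_t,v)$, i.e. when $t\ne L-t$, i.e. when $2t\ne L$. From here I would split into the two cases of the statement. If $L=d(u,v)$ is odd, then $2t=L$ is impossible for an integer $t$, so every $w_t$ resolves $u$ and $v$, proving the first assertion. If $L$ is even, then $2t=L$ holds for the unique index $t=L/2$, so the single midpoint vertex $w_{L/2}$ is equidistant from $u$ and $v$ and fails to resolve them, whereas every other vertex $w_t$ with $t\ne L/2$ satisfies $2t\ne L$ and hence resolves $u$ and $v$; this gives the "all but one" conclusion.

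I do not expect any serious obstacle here, since the argument reduces to the additivity $d(u,w_t)+d(w_t,v)=d(u,v)$ and a trivial parity count. The only point requiring a touch of care is justifying $d(u,w_t)=t$ rather than merely $d(u,w_t)\le t$; the triangle-inequality sandwich above handles this cleanly and should be stated explicitly so the reader sees why the distances to $u$ take each value in $\{0,\dots,L\}$ exactly once. It is also worth remarking that the non-resolving vertex in the even case is exactly the midpoint of the path, which is the intuitive reason a single vertex is lost.
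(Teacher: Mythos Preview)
Your proof is correct and follows essentially the same approach as the paper: both arguments use the additivity $d(u,x)+d(x,v)=d(u,v)$ for $x$ on a shortest path and then observe that $d(u,x)=d(x,v)$ forces $2\,d(u,x)=d(u,v)$, which is impossible when $d(u,v)$ is odd and singles out the midpoint when $d(u,v)$ is even. You simply spell out the triangle-inequality sandwich more explicitly than the paper does.
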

\begin{proof}
	For every vertex $x$ belonging to a shortest path from $u$ to $v$, the equality $d(u,x)+d(x,v)=d(u,v)$ holds. If $d(u,v)$ is odd, then there is no vertex $x$ in the shortest path satisfying $d(u,x)=d(v,x)$.
	If $d(u,v)$ is even, then there is exactly one vertex $x$ in the shortest path satisfying $d(u,x)=d(v,x)=d(u,v)/2$.
\end{proof}

\begin{lemma}\label{lem:resolvingbipartite}
	Let $u$ and $v$ be vertices of a bipartite graph $G$.
	If $d(u,v)$ is odd, then every vertex of $G$ resolves $u$ and $v$.
\end{lemma}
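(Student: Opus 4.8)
The plan is to exploit the bipartite structure of $G$ through a parity argument. Since $G$ is bipartite, I would fix a bipartition $V(G)=A\cup B$, so that every edge has one endpoint in $A$ and one in $B$. The crucial observation is that for any fixed vertex $w$, the parity of the distance $d(w,x)$ depends only on which part of the bipartition contains $x$: concretely, $d(w,x)$ is even if $x$ lies in the same part as $w$, and odd otherwise. This holds because any walk from $w$ to $x$ alternates between $A$ and $B$ at each step, so its length has a fixed parity determined solely by the parts of $w$ and $x$; in particular every $w$--$x$ path, and hence the shortest one, inherits this parity.

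With this fact in hand, the argument is immediate. First I would note that the hypothesis $d(u,v)$ odd forces $u$ and $v$ to lie in different parts of the bipartition, since a shortest $u$--$v$ path of odd length must start and end on opposite sides. Then, for an arbitrary vertex $w\in V(G)$, I compare the parities of $d(w,u)$ and $d(w,v)$: because $u$ and $v$ belong to different parts, exactly one of these two distances has the same parity as the part of $w$ and the other has the opposite parity. Consequently $d(w,u)$ and $d(w,v)$ have different parities, so $d(w,u)\neq d(w,v)$, which is precisely the statement that $w$ resolves $u$ and $v$.

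Since $w$ was arbitrary, this shows that every vertex of $G$ resolves $u$ and $v$, completing the proof. The only step requiring any care is justifying the parity claim, and this reduces to the standard characterization of bipartiteness as the absence of odd closed walks; no real obstacle arises here, and the whole argument is short. I would also remark that this lemma is a clean strengthening of Lemma~\ref{prop:shortestpath} in the bipartite setting, where the conclusion is upgraded from ``vertices on a shortest path'' to ``all vertices of the graph.''
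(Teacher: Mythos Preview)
Your proof is correct and follows essentially the same approach as the paper: both argue that in a bipartite graph the parity of $d(w,x)$ is determined by whether $w$ and $x$ lie in the same part, so if $d(u,v)$ is odd then $u$ and $v$ are in different parts and every vertex $w$ sees distances of different parities to them. Your version simply spells out the parity claim in more detail than the paper does.
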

\begin{proof}
	The distance between two vertices of the same partite set of $G$ is even, and the distance 
	between vertices of different partite sets is odd.
	If $d(u,v)$ is odd, then $u$ and $v$ belong to different partite sets of $G$, and  
	for every vertex $x\in V(G)$, the distances $d(x,u)$ and $d(x,v)$ have different parity. 
	Hence, $x$ resolves $u$ and $v$.
\end{proof}

Next results establish a lower bound on the number of vertices in $F$ resolving a pair of fixed vertices $u$ and $v$, taking into account the number of different coordinates of $u$ and $v$. 

Since grids are bipartite graphs, by Lemma~\ref{lem:resolvingbipartite}, all the vertices of a grid resolve a pair of vertices $u$ and $v$, if $d(u,v)$ is odd.
Thus, we focus on the case $d(u,v)$ even. For $n_1,n_2,n_3\ge 2$, we define

\vspace{-5mm}
\begin{align*}
	\alpha_M(n_1,n_2,n_3)&=\min \{ n_1(n_2+n_3-2),n_2(n_1+n_3-2) ,n_3(n_1+n_2-2) \}.
\end{align*}
It is easy to check that $\alpha_M(n_1,n_2,n_3)=n_i(n_1+n_2+n_3-n_i-2)$, for $n_i=\min \{ n_1,n_2,n_3 \}$.

\begin{lemma}\label{lem:resolvingrecta} Let $u=(x_1,x_2,x_3)$ and $v=(y_1,y_2,y_3)$ be two different vertices of $P_{n_1}\Box P_{n_2}\Box P_{n_3}$ with $x_i\not=y_i$ for exactly one value $i\in \{1,2,3\}$. Then, there are at least $\alpha_M(n_1,n_2,n_3)$ vertices in $F(n_1,n_2,n_3)$ resolving $u$ and $v$.
\end{lemma}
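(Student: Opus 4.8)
The plan is to reduce at once to the case $d(u,v)$ even and then to count the resolving vertices of $F(n_1,n_2,n_3)$ one layer at a time. If $d(u,v)$ is odd, then by Lemma~\ref{lem:resolvingbipartite} \emph{every} vertex of the grid resolves $u$ and $v$, so in particular all vertices of $F(n_1,n_2,n_3)$ do; since $F$ contains, for each of the $n_3$ values of the third coordinate, the whole boundary ring of that layer, one has $|F(n_1,n_2,n_3)|\ge 2n_3(n_1+n_2-2)\ge \alpha_M(n_1,n_2,n_3)$ and the claim is immediate. So assume $d(u,v)$ is even. Using the symmetry among the three coordinates, I may assume that the coordinate in which $u$ and $v$ differ is the third, so $u=(a,b,x_3)$ and $v=(a,b,y_3)$ with $x_3\ne y_3$ and $d(u,v)=|x_3-y_3|$ even. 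Then $|x_3-y_3|\ge 2$, hence $n_3\ge 3$, and $m:=(x_3+y_3)/2$ is an integer with $1\le m\le n_3-2$.

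Next I would characterise the vertices that resolve $u$ and $v$. For an arbitrary $w=(w_1,w_2,w_3)$ the contributions of the first two coordinates to $d(w,u)$ and to $d(w,v)$ are identical, so $w$ resolves $u$ and $v$ if and only if $|w_3-x_3|\ne|w_3-y_3|$; as $x_3\ne y_3$, this fails exactly when $w_3=m$. Thus the resolving vertices are precisely those not lying in the plane $w_3=m$, and it only remains to bound from below how many of them belong to $F$.

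For the count I would proceed layer by layer in the third coordinate. For each fixed $c\in\{0,\dots,n_3-1\}$ the vertices $(w_1,w_2,c)$ with $w_1\in\{0,n_1-1\}$ or $w_2\in\{0,n_2-1\}$ form the boundary ring of that layer; there are $n_1n_2-(n_1-2)(n_2-2)=2(n_1+n_2-2)$ of them and they all belong to $F$. Whenever $c\ne m$, each of these lies outside the plane $w_3=m$ and hence resolves $u$ and $v$. Since there are $n_3-1$ admissible values of $c$ and the rings of distinct layers are disjoint, this already produces $2(n_3-1)(n_1+n_2-2)$ resolving vertices inside $F$ (I discard the interiors of the two extreme faces, which would only increase the count). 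Finally, $n_3\ge 2$ gives $2(n_3-1)\ge n_3$, and $n_3(n_1+n_2-2)$ is one of the three terms whose minimum is $\alpha_M(n_1,n_2,n_3)$, so
\[
2(n_3-1)(n_1+n_2-2)\ \ge\ n_3(n_1+n_2-2)\ \ge\ \alpha_M(n_1,n_2,n_3),
\]
which is the desired bound. The computation is essentially routine; the two points that require care are the symmetry reduction to a single differing coordinate and the observation that it suffices to retain just one term of the minimum defining $\alpha_M$, so that one never needs the exact count $|F(n_1,n_2,n_3)|-2(n_1+n_2-2)$ but only a clean lower bound for it.
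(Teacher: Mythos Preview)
Your proof is correct and follows essentially the same approach as the paper: reduce by symmetry to $u$ and $v$ differing only in the third coordinate, observe that a vertex fails to resolve $u,v$ iff its third coordinate equals $m=(x_3+y_3)/2$, and then show that the resolving vertices in $F$ number at least $n_3(n_1+n_2-2)\ge\alpha_M$. The only minor difference is bookkeeping in the last step: the paper subtracts the at most $2(n_1+n_2-2)$ non-resolving vertices from $|F|$ and bounds $|F|-2(n_1+n_2-2)\ge n_3(n_1+n_2-2)$ algebraically, whereas you count the $n_3-1$ boundary rings away from layer $m$ directly to get $2(n_3-1)(n_1+n_2-2)\ge n_3(n_1+n_2-2)$; both arrive at the same comparison with $\alpha_M$.
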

\begin{proof} Assume without loss of generality that $x_1=y_1$, $x_2=y_2$ and $x_3\not=y_3$. 
A vertex $z=(z_1,z_2,z_3)$ does not resolve $u$ and $v$ if and only $d(z,u)=|x_1-z_1|+|x_2-z_2|+|x_3-z_3|=|x_1-z_1|+|x_2-z_2|+|y_3-z_3|=d(z,v)$, that is, if and only if $|x_3-z_3|=|y_3-z_3|$.
Hence, the set of vertices not resolving $u$ and $v$ is $\{ (a,b,(x_3+y_3)/2) : 0\le a\le n_1-1, 0\le b\le n_2-1\}$ (note that this set is empty when $x_3$ and $y_3$ have different parity). In any case, there are at least $|F|- (2n_1+2n_2-4)$ vertices in $F$ resolving $u$ and $v$. 
But	

\vspace{-5mm}
\begin{align*}
|F|&- (2n_1+2n_2-4)-n_3(n_1+n_2-2)\\
=&2(n_1-2)(n_2-2)+(n_2-2)(n_3-2)+(n_1-2)(n_3-2)+2(n_3-2)\ge 0.
\end{align*}
Hence, $|F|- (2n_1+2n_2-4)\ge n_3(n_1+n_2-2)\ge \alpha_M$.
\end{proof}

Lemma~\ref{prop:shortestpath}  shows when a vertex of a shortest path between two fixed vertices $u$ and $v$ resolves the pair $u$ and $v$.  Next, we analize which vertices not belonging to the shortest path resolve $u$ and $v$.  We denote by $S(u,v)$ the set of vertices belonging to a shortest path between $u$ and $v$ in a 3D grid.
Concretely, for every pair $u=(x_1,x_2,x_3)$ and $v=(y_1,y_2,y_3)$ of distinct vertices of the grid $P_{n_1}\Box P_{n_2}\Box P_{n_3}$,   $S(u,v)$ is the set of vertices: 

\vspace{-3mm}
{\small
$$[\min\{x_1,y_1\}, \max \{x_1,y_1\}]\times [\min\{x_2,y_2\}, \max \{x_2,y_2\}]\times [\min\{x_3,y_3\}, \max \{x_3,y_3\}].$$ }
Hence, $S(u,v)$ induces a 3D grid such that $u$ and $v$ are corners, if the three coordinates of $u$ and $v$ are different; a 2D grid, if  $u$ and $v$ have exactly 2 different coordinates; and a path,  if  $u$ and $v$ differ by exactly one coordinate.

Next, we  associate a vertex not in $S(u,v)$  with a vertex of the subgrid $S(u,v)$ (belonging to one of its faces, whenever $S(u,v)$ is a 3D grid) so that either both vertices resolve $u$ and $v$ or neither of them resolves $u$ and $v$. 
The result is stated for the first coordinate, but it holds similarly for the second and third coordinates.

\begin{lemma}\label{lem:alsoresolving} Let $u=(x_1,x_2,x_3)$ and $v=(y_1,y_2,y_3)$ be two distinct vertices of $P_{n_1}\Box P_{n_2}\Box P_{n_3}$. Let $z=(z_1,z_2,z_3)$.
	\begin{enumerate}[i)]
		\item If $z_1\le \min\{x_1,y_1\}$, then $z$ resolves $u$ and $v$ if and only if 
		vertex $(\min\{x_1,y_1\},z_2,z_3)$ 
		resolves $u$ and $v$.
		\item If $z_1\ge \max\{x_1,y_1\}$, then $z$ resolves $u$ and $v$ if and only if 
	    vertex $(\max\{x_1,y_1\},z_2,z_3)$ 
		resolves $u$ and $v$.
	\end{enumerate}
\end{lemma}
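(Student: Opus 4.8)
The plan is to work directly with the explicit distance formula for the grid and to isolate the contribution of the first coordinate to the quantity $d(z,u)-d(z,v)$, since $z$ resolves $u$ and $v$ precisely when this quantity is nonzero. Writing
\[
d(z,u)-d(z,v)=\bigl(|z_1-x_1|-|z_1-y_1|\bigr)+\bigl(|z_2-x_2|-|z_2-y_2|\bigr)+\bigl(|z_3-x_3|-|z_3-y_3|\bigr),
\]
I observe that the second and third summands depend only on $z_2$ and $z_3$, which are shared by $z$ and the comparison vertex in both items. Hence it suffices to show that the first summand takes the same value for $z$ and for the clamped vertex.

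For item i), suppose $z_1\le \min\{x_1,y_1\}$. Then $z_1\le x_1$ and $z_1\le y_1$, so $|z_1-x_1|=x_1-z_1$ and $|z_1-y_1|=y_1-z_1$, giving $|z_1-x_1|-|z_1-y_1|=x_1-y_1$, which is independent of $z_1$. Setting $m=\min\{x_1,y_1\}$, the same computation with $m$ in place of $z_1$ (valid since $m\le x_1$ and $m\le y_1$) yields $|m-x_1|-|m-y_1|=x_1-y_1$ as well. Therefore the first-coordinate contribution agrees for $z$ and for $(\min\{x_1,y_1\},z_2,z_3)$, and since the remaining two contributions are identical, I get that $d(z,u)-d(z,v)$ and $d((\min\{x_1,y_1\},z_2,z_3),u)-d((\min\{x_1,y_1\},z_2,z_3),v)$ coincide. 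In particular, one difference is zero exactly when the other is, which is the desired equivalence.

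Item ii) is handled by the symmetric computation: when $z_1\ge \max\{x_1,y_1\}$ both absolute values open with the opposite sign, so $|z_1-x_1|-|z_1-y_1|=y_1-x_1$, again independent of $z_1$ and equal to the value obtained at $M=\max\{x_1,y_1\}$. The only point requiring care is the sign bookkeeping in these two case analyses; once the absolute values are resolved, the statement reduces to the elementary fact that, outside the interval $[\min\{x_1,y_1\},\max\{x_1,y_1\}]$, the expression $|t-x_1|-|t-y_1|$ is constant in $t$. I do not anticipate any genuine obstacle beyond this observation, and the analogous statements for the second and third coordinates follow verbatim by permuting the indices.
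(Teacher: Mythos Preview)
Your proof is correct and follows essentially the same approach as the paper. The paper phrases the computation slightly differently---it observes that $d(z,u)=d(z,z')+d(z',u)$ and $d(z,v)=d(z,z')+d(z',v)$ for $z'=(\min\{x_1,y_1\},z_2,z_3)$, so both distances shift by the same constant---but this is exactly your observation that the first-coordinate contribution $|z_1-x_1|-|z_1-y_1|$ is independent of $z_1$ once $z_1$ lies outside $[\min\{x_1,y_1\},\max\{x_1,y_1\}]$.
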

\begin{proof} We prove only the first item, because the second one is derived analogously. Assume without loss of generality that $x_1\le y_1$. 
	Let $z=(z_1,z_2,z_3)$ be such that $z_1\le x_1$ and let $z'=(x_1,z_2,z_3)$. 
	Then, $d(z,u)=|x_1-z_1|+|x_2-z_2|+|x_3-z_3|=d(z,z')+d(z',u)$.
	Similarly, we have that  $d(z,v)=|y_1-z_1|+|y_2-z_2|+|y_3-z_3|=|y_1-x_1|+|x_1-z_1|+|y_2-z_2|+|y_3-z_3|
	=|x_1-z_1|+(|y_1-x_1|+|y_2-z_2|+|y_3-z_3|)=d(z,z')+d(z',v)$.
	Hence, $z$ resolves $u$ and $v$ if and only if $z'$ resolves $u$ and $v$.
\end{proof}

Note that the preceding lemma implies that a vertex $z$ not in $S(u,v)$ resolves $u$ and $v$ if and only if the vertex 
in $S(u,v)$ closest to $z$ resolves $u$ and $v$.

\begin{lemma}\label{lem:resolvingpla} Let $u=(x_1,x_2,x_3)$ and $v=(y_1,y_2,y_3)$ be two different vertices of $P_{n_1}\Box P_{n_2}\Box P_{n_3}$ with $x_i=y_i$ for exactly one value of $\{1,2,3\}$. Then, there are at least $\alpha_M(n_1,n_2,n_3)$ vertices of $F(n_1,n_2,n_3)$ resolving $u$ and $v$.
\end{lemma}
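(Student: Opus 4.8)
The plan is to reduce the problem to the plane and then count. Assume without loss of generality that the common coordinate is the third one, so $x_3=y_3$, and that $x_1<y_1$, $x_2<y_2$ (the second inequality after reflecting one coordinate, which is an automorphism of the grid fixing both $F$ and $\alpha_M(n_1,n_2,n_3)$). Since grids are bipartite, if $d(u,v)$ is odd Lemma~\ref{lem:resolvingbipartite} shows that all $|F|\ge\alpha_M(n_1,n_2,n_3)$ vertices of $F$ resolve $u$ and $v$, so I may assume $d(u,v)$ even. The key observation is that $d(z,u)-d(z,v)$ does not involve $z_3$, because the terms $|x_3-z_3|$ and $|y_3-z_3|$ cancel; hence $z=(z_1,z_2,z_3)$ resolves $u$ and $v$ if and only if the planar point $(z_1,z_2)$ resolves $(x_1,x_2)$ and $(y_1,y_2)$ in $P_{n_1}\Box P_{n_2}$. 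Writing $N$ for the set of planar points not resolving this pair, the non-resolving vertices of the \threedim grid form the cylinder $N\times\{0,\dots,n_3-1\}$.

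Next I would describe $N$ explicitly. Applying Lemma~\ref{lem:alsoresolving} in the first two coordinates reduces resolving to the rectangle $R=S(u,v)$, and Lemma~\ref{prop:shortestpath} locates the non-resolving points inside $R$ as the single anti-diagonal bisector segment $z_1+z_2=(x_1+y_1+x_2+y_2)/2$. Clamping the endpoints of this segment outward then produces all of $N$. Setting $D_1=y_1-x_1$ and $D_2=y_2-x_2$, two regimes appear: if $D_1\neq D_2$, the two endpoints lie in the interiors of opposite sides of $R$, so $N$ is a one-dimensional ``staircase'' whose size equals the side length of the grid in the coordinate with the smaller difference and which meets the planar boundary in only a few points; if $D_1=D_2$, the endpoints reach two opposite corners of $R$ and $N$ additionally contains the two rectangular quadrants that are the clamp-preimages of those corners, so $N$ becomes two-dimensional.

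To finish I would count. Letting $B$ denote the boundary vertices of the planar grid $P_{n_1}\Box P_{n_2}$, a non-resolving planar point contributes its whole column to $F$ if it lies in $B$ and only its two extreme vertices otherwise; hence the number of vertices of $F$ resolving $u$ and $v$ equals $|F|-2|N|-(n_3-2)\,|N\cap B|$. In the staircase regime $|N|$ and $|N\cap B|$ are small and the inequality $|F|-2|N|-(n_3-2)|N\cap B|\ge\alpha_M(n_1,n_2,n_3)$ holds with ample room. The delicate case is $D_1=D_2$: here I would write $|N|$ and $|N\cap B|$ exactly in terms of the four margins $x_1+1$, $n_1-y_1$, $x_2+1$, $n_2-y_2$ and the common difference $D$, handling carefully the overlaps between the two quadrants and the diagonal, and then verify the inequality by reducing it, as in the proof of Lemma~\ref{lem:resolvingrecta}, to the nonnegativity of an expression built from products $(n_i-2)(n_j-2)$ and squares.

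The main obstacle is precisely this $D_1=D_2$ case. Because the non-resolving set is genuinely two-dimensional and can be large, no crude bound on $|N|$ suffices: the target inequality is tight, with equality attained for instance at $n_1=n_2=n_3=2$ and, more generally, for diagonal configurations placed in a corner of the grid. Thus the heart of the proof is the exact computation of $|N|$ and $|N\cap B|$ together with the constrained optimization showing that $|F|-2|N|-(n_3-2)|N\cap B|$ is minimized exactly at $\alpha_M(n_1,n_2,n_3)$.
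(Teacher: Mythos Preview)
Your plan is sound: the reduction to the plane via the observation that $d(z,u)-d(z,v)$ is independent of $z_3$ is correct, your description of the planar non-resolving set $N$ via clamping is correct, and your formula $|F|-2|N|-(n_3-2)\,|N\cap B|$ for the number of resolving vertices in $F$ is correct. The approach would succeed once you carry out the optimisation you describe.

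The paper, however, sidesteps that optimisation entirely by counting less. Instead of all of $F$, it counts only resolving vertices on the four \emph{side} faces (those with $z_1\in\{0,n_1-1\}$ or $z_2\in\{0,n_2-1\}$), i.e.\ the $n_3$-columns sitting over the planar boundary $B$. Since a column resolves if and only if its base point lies in $B\setminus N$, the resolving count on side faces is exactly $n_3\,|B\setminus N|$, and because $|B|=2n_1+2n_2-4$ it suffices to show $|N\cap B|\le n_1+n_2-2$. In your two regimes this is immediate: when $D_1\ne D_2$ you have $|N\cap B|=2$, and when $D_1=D_2=D$ your own margin variables give $|N\cap B|=n_1+n_2-2D\le n_1+n_2-2$. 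Thus $n_3\,|B\setminus N|\ge n_3(n_1+n_2-2)\ge\alpha_M(n_1,n_2,n_3)$ in both cases, with no need to compute $|N|$ at all. The ``constrained optimisation'' you identified as the main obstacle disappears because the two-dimensional part of $N$ in the equal-difference case lies entirely in the planar interior and never touches $B$ beyond the two quadrant corners. Your exact count would give a sharper lower bound, but the paper's undercount already reaches $\alpha_M$ and is far cleaner.
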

\begin{proof} 
We may assume without loss of generality that $x_1<y_1$, $x_2<y_2$ and $x_3=y_3$. Then, $d(u,v)=y_1-x_1+y_2-x_2$. 
If $d(u,v)$ is odd, then all the vertices in $F$ resolve $u$ and $v$ by Lemma~\ref{lem:resolvingbipartite}.
Now suppose that $d(u,v)=y_1-x_1+y_2-x_2$ is even.
If $y_1-x_1=y_2-x_2$, then, using Lemma~\ref{lem:alsoresolving} we derive that the vertices in $F$ belonging to the set 

\vspace{-5mm}
 \begin{align*}
&\{ (0,b,c) : 0\le b<y_2, 0\le c\le n_3-1\}\\ &\cup \{ (n_1-1,b,c) : x_2< b<n_2-1, 0\le c\le n_3-1\}\\ &\cup 
\{ (a,0,c) : 0\le a<y_1, 0\le c\le n_3-1\}\\ &\cup \{ (a,n_2-1,c) : x_1< a<n_1-1, 0\le c\le n_3-1\}
 \end{align*}
resolve $u$ and $v$, and the number of vertices of this set is $(y_2+(n_2-1-x_2)+y_1+(n_1-1-x_1)-2)n_3=(y_2-x_2+y_1-x_1+n_1+n_2-4)n_3\ge 
(n_1+n_2-2)n_3$ (see Figure~\ref{fig:nonresolvingcapa}, left).

\begin{figure}[t!]
	\centering
	\includegraphics[width=0.9\textwidth]{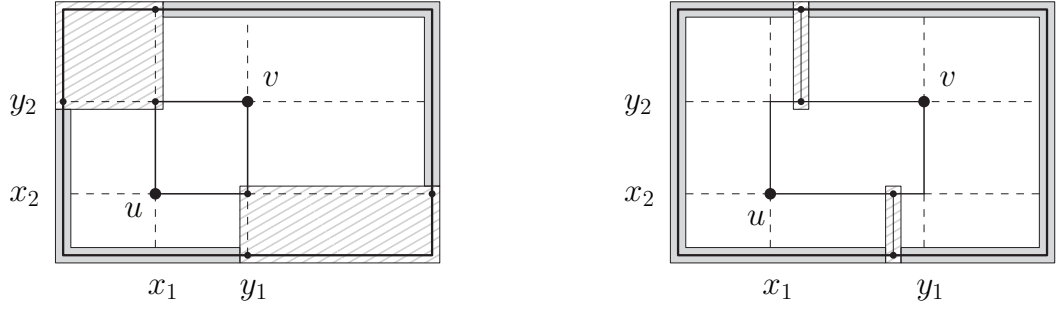}
	\caption{Vertices $(a,b,c)$, with $c=x_3$, of the 3D grid are represented.  Left, case $y_1-x_1=y_2-x_2$ and right, $y_1-x_1\not= y_2-x_2$. Vertices  in the striped region do not resolve $u$ and $v$. In both cases, there are at least $n_1+n_2-2$ vertices in the boundary resolving $u$ and $v$ (gray region).
	}
	\label{fig:nonresolvingcapa}
\end{figure}

If  $y_1-x_1\not= y_2-x_2$, we may assume $y_1-x_1 < y_2-x_2$. Let $r=(y_1-x_1+y_2-x_2)/2$. Then,
the following vertices in $F$ resolve $u$ and $v$:

\vspace{-5mm}
\begin{align*}
&\{ (0,b,c) : b\not= x_2+r,  0\le c\le n_3-1\}\\ 
&\cup \{ (n_1-1,b,c) : b\not= y_2-r,  0\le c\le n_3-1\}\\ 
&\cup \{ (a,0,c) : 0\le a\le n_1-1, 0\le c\le n_3-1\}\\ 
&\cup \{ (a,n_2-1,c) : 0\le a\le n_1-1, 0\le c\le n_3-1\}.
\end{align*}
and this set has at least $(2n_1+2n_2-4)n_3$ vertices.
Hence, there are at least $(n_1+n_2-2)n_3\ge \alpha_M$ vertices in $F$ resolving $u$ and $v$  (see Figure~\ref{fig:nonresolvingcapa}, right).
\end{proof}

\begin{lemma}\label{lem:resolving2capas} Let $u=(x_1,x_2,0)$ and $v=(y_1,y_2,n_3-1)$ be two vertices of $P_{n_1}\Box P_{n_2}\Box P_{n_3}$. Then, there are at least $n_3(n_1+n_2-2)$ vertices in $F(n_1,n_2,n_3)$ resolving $u$ and $v$.
\end{lemma}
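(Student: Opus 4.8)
The plan is to count resolving vertices only among the four \emph{side faces} $T=\{z\in F:\, z_1\in\{0,n_1-1\}\text{ or }z_2\in\{0,n_2-1\}\}$; these already lie in $F$ and satisfy $|T|=(2n_1+2n_2-4)n_3$, while the top and bottom faces $z_3\in\{0,n_3-1\}$ will not be needed. First I would dispose of the case $d(u,v)$ odd: by Lemma~\ref{lem:resolvingbipartite} every vertex of the grid resolves $u$ and $v$, and since $|F|=2(n_1-2)(n_2-2)+2n_3(n_1+n_2-2)\ge n_3(n_1+n_2-2)$ we are done. So assume $d(u,v)$ is even. The reflections $z_i\mapsto n_i-1-z_i$ for $i\in\{1,2\}$ are automorphisms of the grid that fix $F$ and preserve the hypotheses on $u$ and $v$, so I may assume $x_1\le y_1$ and $x_2\le y_2$; writing $p=y_1-x_1$, $q=y_2-x_2$ and $m=d(u,v)/2=(p+q+n_3-1)/2\in\mathbb{Z}$, a relabeling of the first two coordinates lets me also assume $p\ge q$.

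Next I would make the non-resolving condition explicit on each side face. For $z$ on the face $z_2=0$, expanding $d(z,u)=d(z,v)$ and using $|z_2-x_2|-|z_2-y_2|=-q$ there, the equality reduces---after writing $g_1(z_1)\in[0,p]$ for the clamp of $z_1$ to $[x_1,y_1]$ shifted by $-x_1$, so that $|z_1-x_1|-|z_1-y_1|=2g_1(z_1)-p$ (equivalently, replacing $z$ by its nearest point of $S(u,v)$ via Lemma~\ref{lem:alsoresolving})---to the clean condition that $z$ does \emph{not} resolve $u,v$ iff $g_1(z_1)+z_3=m$. On the opposite face $z_2=n_2-1$ the condition becomes $g_1(z_1)+z_3=m-q$, and symmetrically on the faces $z_1=0$ and $z_1=n_1-1$ the conditions read $g_2(z_2)+z_3=m$ and $g_2(z_2)+z_3=m-p$. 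I then split $T$ into the disjoint pieces $X$, the faces $z_2\in\{0,n_2-1\}$ (of size $2n_1n_3$), and $Y$, the faces $z_1\in\{0,n_1-1\}$ with $0<z_2<n_2-1$ (of size $2(n_2-2)n_3$), so that $X\cup Y=T$.

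For $p>0$ I fold $Y$ by the fixed-point-free involution $(0,z_2,z_3)\leftrightarrow(n_1-1,z_2,z_3)$: the two non-resolving conditions $g_2(z_2)+z_3=m$ and $g_2(z_2)+z_3=m-p$ differ, so each pair contains a resolving vertex and $Y$ contributes at least $(n_2-2)n_3$. For $X$, if $q>0$ the analogous fold $(z_1,0,z_3)\leftrightarrow(z_1,n_2-1,z_3)$ yields at least $n_1n_3$ resolving vertices; if $q=0$ both faces of $X$ obey the single condition $g_1(z_1)+z_3=m$, which for each $z_1$ has at most one solution in $z_3$, so $X$ has at most $2n_1$ non-resolving vertices and hence at least $2n_1(n_3-1)\ge n_1n_3$ resolving ones (here $n_3\ge2$). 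Either way $X$ contributes at least $n_1n_3$, and summing gives at least $n_1n_3+(n_2-2)n_3=n_3(n_1+n_2-2)$ resolving vertices inside $T\subseteq F$. Finally, the case $p=q=0$ (where $u,v$ differ only in the third coordinate, as in Lemma~\ref{lem:resolvingrecta}) is handled directly: the non-resolving vertices form the single layer $z_3=(n_3-1)/2$, whose trace on $T$ has $2n_1+2n_2-4$ vertices, so $T$ contributes $(2n_1+2n_2-4)(n_3-1)\ge n_3(n_1+n_2-2)$ resolving vertices.

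I expect the main obstacle to be the degenerate case $q=0$: there the coordinate-$2$ folding collapses, since both faces of $X$ carry the \emph{same} condition, so the clean ``one resolver per pair'' counting is unavailable and one must instead bound the non-resolving locus on $X$ directly and verify, using only $n_3\ge2$, that the weaker estimate $2n_1(n_3-1)$ still dominates the required $n_1n_3$. Keeping track of the integrality of $m$ and of the admissible ranges of $z_3$ in these explicit counts is the part demanding the most care.
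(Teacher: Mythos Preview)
Your argument is correct. Both you and the paper work only with the side-face set $T=\{z:z_1\in\{0,n_1-1\}\text{ or }z_2\in\{0,n_2-1\}\}$, but the organizing principle is different. You slice $T$ by \emph{face}, derive the explicit non-resolving equation $g_i+z_3=\text{const}$ on each of the four faces, and pair opposite faces; this forces you into the case split $q>0$ / $q=0$ / $p=q=0$ because the pairing collapses when the two constants coincide. The paper instead slices $T$ by \emph{column}: for each boundary point $(a,b)$ of $P_{n_1}\Box P_{n_2}$ it considers $T_{a,b}=\{(a,b,c):0\le c\le n_3-1\}$, projects into $S(u,v)$ via Lemma~\ref{lem:alsoresolving}, and observes that the projected column lies on a single shortest $u$--$v$ path, so by Lemma~\ref{prop:shortestpath} it contains at most one non-resolver. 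That yields the uniform bound $(2n_1+2n_2-4)(n_3-1)\ge n_3(n_1+n_2-2)$ with no case analysis at all. Your approach buys an explicit description of the non-resolving locus (useful if one wanted sharper counts), while the paper's column argument is shorter and sidesteps precisely the degenerate case you flagged as the main obstacle.
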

\begin{proof}
	We claim that for every pair $a$ and $b$, such that $a\in \{ 0,n_1-1\}$ or $b\in \{ 0, n_2-1\}$, there is at most one vertex in the set $T_{a,b}=\{ (a,b,c): 0\le c\le n_3-1 \}$ not resolving $u$ and $v$.
	Indeed, if there were at most two vertices not resolving them, then by Lemma~\ref{lem:alsoresolving} there would be two vertices $z$ and $z'$ belonging to  
	some set $T'_{a',b'}=\{ (a',b',c): 0\le c\le n_3-1  \}$ not resolving $u$ and $v$, where $x_1\le a'\le y_1$, $x_2\le b'\le y_2$ and either $a'\in\{ x_1,y_1\}$ or $b'\in \{x_2,y_2\}$. But this is a contradiction by Lemma~\ref{prop:shortestpath}, because there is a shortest path between $u$ and $v$ that goes through both $z$ and $z'$.
 Notice that there are $2n_1+2n_2-4$ sets of type $T_{a,b}$, with $a\in \{ 0,n_1-1\}$ or $b\in \{ 0, n_2-1\}$. Hence, there are at least $(2n_1+2n_2-4)(n_3-1)$ vertices in $F(n_1,n_2,n_3)$ resolving $u$ and $v$, and  
 $(2n_1+2n_2-4)(n_3-1)\ge (n_1+n_2-2)(2n_3-2)\ge (n_1+n_2-2)n_3$, where the last inequality holds because $n_3\ge 2$.
\end{proof}

\begin{lemma}\label{lem:resolvingcapa} Let $u=(x_1,x_2,x_3)$ and $v=(y_1,y_2,y_3)$ be two different vertices of $P_{n_1}\Box P_{n_2}\Box P_{n_3}$ with $x_i\not=y_i$, for every $i\in \{ 1,2,3 \}$. Then, there are at least $n_1+n_2-2$ vertices $(z_1,z_2,z_3)$ resolving $u$ and $v$ with $z_3=n_3-1$, and either $z_1\in \{0,n_1-1\}$ or $z_2\in \{0,n_2-1\}$.
\end{lemma}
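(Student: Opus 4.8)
The plan is to work directly on the top face $z_3=n_3-1$ and bound the number of points that \emph{fail} to resolve $u$ and $v$. First I would record that, because $n_3-1\ge\max\{x_3,y_3\}$, every $z=(z_1,z_2,n_3-1)$ contributes the same fixed amount $|x_3-(n_3-1)|-|y_3-(n_3-1)|=y_3-x_3=:c$ to the difference $d(z,u)-d(z,v)$, and $c\neq 0$ since $x_3\neq y_3$. Setting $g_i(t)=|x_i-t|-|y_i-t|$ for $i\in\{1,2\}$, expanding the distances gives $d(z,u)-d(z,v)=g_1(z_1)+g_2(z_2)+c$, so $z$ does not resolve $u$ and $v$ if and only if $g_1(z_1)+g_2(z_2)=-c$. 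This reduces the whole question to counting boundary solutions of a single scalar equation.

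The key step is a pairing of opposite boundary points. Since $x_1\neq y_1$, one has $g_1(0)=x_1-y_1\neq y_1-x_1=g_1(n_1-1)$, and similarly $g_2(0)\neq g_2(n_2-1)$. Consequently, for each fixed $z_1$ the two points $(z_1,0,n_3-1)$ and $(z_1,n_2-1,n_3-1)$ cannot both be non-resolving, because that would force $g_2(0)=g_2(n_2-1)$; symmetrically, for each fixed $z_2$ the points $(0,z_2,n_3-1)$ and $(n_1-1,z_2,n_3-1)$ cannot both be non-resolving. This is the same ``at most one failure per line'' principle used in Lemma~\ref{lem:resolving2capas}, now applied to pairs of antipodal boundary points rather than to full columns. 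Note that no case analysis on the relative order of the coordinates is needed, since these inequalities hold verbatim.

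To conclude I would split the $2n_1+2n_2-4$ relevant boundary points into the two horizontal sides (the $n_1$ column pairs $(z_1,0,n_3-1),(z_1,n_2-1,n_3-1)$) and the interiors of the two vertical sides (the $n_2-2$ row pairs $(0,z_2,n_3-1),(n_1-1,z_2,n_3-1)$ with $1\le z_2\le n_2-2$). By the pairing, at most $n_1$ points of the first block and at most $n_2-2$ points of the second block fail, so at most $n_1+n_2-2$ boundary points are non-resolving; the remaining at least $(2n_1+2n_2-4)-(n_1+n_2-2)=n_1+n_2-2$ points resolve $u$ and $v$. The hard part is precisely this last bookkeeping: grouping all four sides symmetrically double-counts the four corners and only yields the weaker bound $n_1+n_2$, so it is essential to take full columns on one pair of sides but only the interior rows on the other. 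One can also verify separately that at most one corner is ever non-resolving, which confirms that the bound $n_1+n_2-2$ is tight.
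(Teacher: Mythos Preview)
Your proof is correct and takes a genuinely different route from the paper's. The paper first reduces via Lemma~\ref{lem:alsoresolving} to the layer $z_3=y_3$, then builds two explicit shortest $u$--$v$ paths through that layer, invokes Lemma~\ref{prop:shortestpath} to get at most one non-resolving vertex on each path, and finishes with a case analysis on whether the set $S$ of non-resolving vertices along those paths has size $0$, $1$, or $2$ (with further sub-cases when $|S|=2$). Your argument bypasses all of this: by writing $d(z,u)-d(z,v)=g_1(z_1)+g_2(z_2)+c$ and observing $g_i(0)=-g_i(n_i-1)\neq 0$, you immediately obtain that antipodal boundary points cannot both fail, and the asymmetric grouping into $n_1$ full column-pairs plus $n_2-2$ interior row-pairs gives the bound in one line. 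This is shorter, avoids the WLOG normalization, needs neither Lemma~\ref{prop:shortestpath} nor Lemma~\ref{lem:alsoresolving}, and requires no case analysis; the paper's approach, on the other hand, makes the geometric picture (shortest paths and the region of non-resolving vertices, as in Figure~\ref{fig:nonresolvingcapalema11}) more explicit.
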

\begin{proof} 
	Assume without loss of generality that $x_i<y_i$ for all $i\in \{1,2,3\}$. Let $C=\{(z_1,z_2,n_3-1): z_1\in \{0,n_1-1\}\hbox{ or }z_2\in \{0,n_2-1\}\}$ and let $C'=\{(z_1,z_2,y_3): z_1\in \{0,n_1-1\}\hbox{ or }z_2\in \{0,n_2-1\}\}$. Note that $|C|=|C'|=2n_1+2n_2-4$. 
	By Lemma~\ref{lem:alsoresolving}, a vertex $z=(z_1,z_2,n_3-1)\in C$ resolves $u$ and $v$ if and only if 
	$(z_1,z_2,y_3)\in C'$ resolves $u$ and $v$. Therefore, it is enough to show that there are at least $n_1+n_2-2$ vertices in $C'$ resolving $u$ and $v$.
	
	Consider the sets $S_1=\{ (x_1,b, y_3): x_2\le b\le y_2 \}\cup \{ (a,y_2, y_3): x_1< a\le y_1 \}$,  
	$S_2=\{ (a,x_2, y_3): x_1\le a\le y_1 \}\cup \{ (y_1,b, y_3): x_2< b\le y_2 \}$ and $S_3=\{ (x_1,x_2,c): x_3\le c < y_3\}$. 
	The vertices of $S_1\cup S_3$ form a shortest path between $u$ and $v$ as well as the vertices of $S_2\cup S_3$. 
    Hence, there is at most one vertex in $S_1$ and at most one vertex in $S_2$ not resolving $u$ and $v$. 
    Let $S$ be the set of vertices in $S_1\cup S_2$ not resolving $u$ and $v$. 
    Thus, $|S|\in \{ 0,1,2\}$.
    Note that for $w=(z_1,z_2,y_3)\in S$, we have $d(w,v)=d(w,u)= d(w, (x_1,x_2,y_3))+(y_3-x_3)>d(w, (x_1,x_2,y_3))$
    
    If $|S|=0$, then all the vertices of $C'$ resolve $u$ and $v$ by Lemma~\ref{lem:alsoresolving}, and  $|C'|=2n_1+2n_2-4> n_1+n_2-2$.
     
    If $|S|=1$, then, since $|S_1|=|S_2|$, the vertex in $S$ must be $(x_1,x_2,y_3)$. By Lemma~\ref{lem:alsoresolving}, there are 
    $2n_1+2n_2-4-(x_1+x_2+1)$ vertices in $C'$ resolving $u$ and $v$.
    But $2n_1+2n_2-4-(x_1+x_2+1)\ge n_1+n_2-2$, because $x_i<y_i\le n_i-1$, for $i=1,2$ (see Figure~\ref{fig:nonresolvingcapalema11}(a)). 
    
    \begin{figure}[t!]
    	\centering
    	\includegraphics[width=0.7\textwidth]{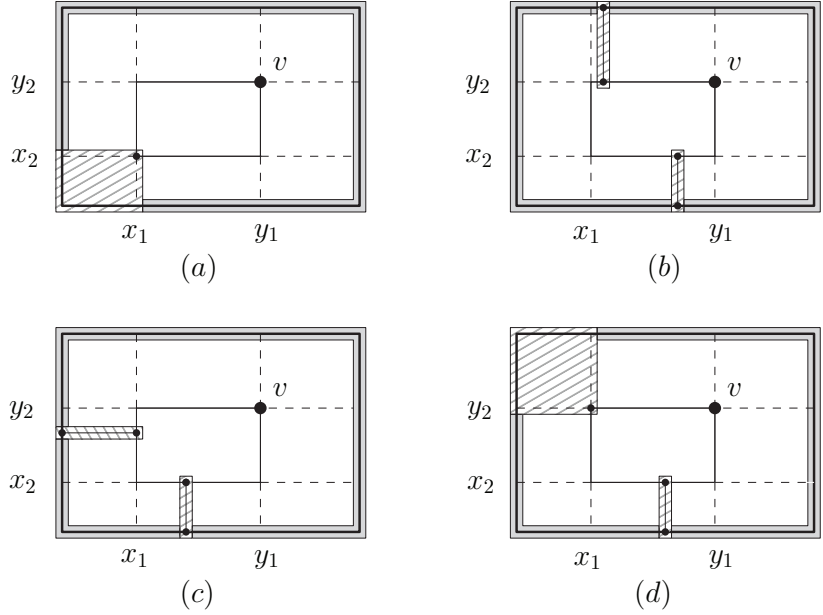}
    	\caption{Vertices $(a,b,c)$, with $c=y_3$, of the 3D grid are depicted. 
    		Vertices in the striped region do not resolve $u$ and $v$.
    		The vertices of $C'$ resolving $u$ and $v$ are in the gray region. 
    		(a) $|S|=1$. (b), (c) and (d), $|S|=2$.   		
    	}
    	\label{fig:nonresolvingcapalema11}
    \end{figure}    

    If $|S|=2$, then $(x_1,x_2,y_3)\notin S$,  one vertex of $S$ is in $S_1\setminus S_2$, the other one belongs to $S_2\setminus S_1$, and both are at the same distance from $(x_1,x_2,y_3)$.
    Moreover, there is at most one vertex belonging to $S$ in $\{(x_1,y_2,y_3), (y_1,x_2,y_3)\}$. Indeed, 
    if $S=\{(x_1,y_2,y_3), (y_1,x_2,y_3)\}$, then $y_1-x_1=y_2-x_2+y_3-x_3$ and $y_2-x_2=y_1-x_1+y_3-x_3$, because the vertices of $S$ are at the same distance from $u$ and $v$, and this is not possible because $y_3-x_3>0$. 
    If $S\cap \{(x_1,y_2,y_3), (y_1,x_2,y_3)\}=\emptyset$, then by Lemma~\ref{lem:alsoresolving} there are at least 
    $2n_1+2n_2-4-2$ vertices in $C'$ resolving $u$ and $v$, and $2n_1+2n_2-4-2\ge n_1+n_2-2$ because $n_1,n_2\ge 2$
    (see Figure~\ref{fig:nonresolvingcapalema11}(b,c)).
    If $|S\cap \{(x_1,y_2,y_3), (y_1,x_2,y_3)\}|=1$, we may assume without loss of generality that $y_2-x_2<y_1-x_1$ so that 
    $S\cap \{(x_1,y_2,y_3), (y_1,x_2,y_3)\}=(x_1,y_2,y_3)$.
    By Lemma~\ref{lem:alsoresolving} there are at least 
    $2n_1+2n_2-4-1-(x_1+n_2-y_2)$ vertices in $C'$ resolving $u$ and $v$,
    $2n_1+2n_2-4-1-(x_1+n_2-y_2)\ge n_1+n_2-2$ because $x_1<y_1\le n_1-1$ and $0\le x_2< y_2$
    (see Figure~\ref{fig:nonresolvingcapalema11}(d)).

   In any case, there are at least $n_1+n_2-2$ vertices in $C'$ resolving $u$ and $v$,  and so in $C$, as we wanted to prove.
\end{proof}

We finish this section with a lower bound on the number of vertices belonging to the faces of a 3D grid resolving any fixed pair of vertices.
\begin{prop}\label{prop:kftrsgap} 	 
	Let $u,v\in V(P_{n_1}\Box P_{n_2}\Box P_{n_3})$.
	Then, there are at least $\alpha_M(n_1,n_2,n_3)$ vertices in $F(n_1,n_2,n_3)$ resolving $u$ and $v$.
\end{prop}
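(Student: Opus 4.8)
The plan is to argue by cases on the number of coordinates in which $u$ and $v$ differ, which is $1$, $2$, or $3$ since $u\neq v$. If $u$ and $v$ differ in exactly one coordinate, the bound follows at once from Lemma~\ref{lem:resolvingrecta}; if they differ in exactly two coordinates, it follows from Lemma~\ref{lem:resolvingpla}. Both of these lemmas already absorb the odd-distance situation through Lemma~\ref{lem:resolvingbipartite}, so no separate parity discussion is needed there. All the difficulty is concentrated in the remaining case, where $u=(x_1,x_2,x_3)$ and $v=(y_1,y_2,y_3)$ differ in all three coordinates; after reflecting each axis if necessary, I may assume $x_i<y_i$ for every $i\in\{1,2,3\}$.

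In this case I would not count resolvers on all of $F$ at once, but only on the four side faces given by $z_1\in\{0,n_1-1\}$ or $z_2\in\{0,n_2-1\}$. These faces split into the $2n_1+2n_2-4$ vertical columns $T_{a,b}=\{(a,b,c):0\le c\le n_3-1\}$, one for each boundary vertex $(a,b)$ of $P_{n_1}\Box P_{n_2}$, each of height $n_3$ and wholly contained in $F$. Since $\alpha_M\le n_3(n_1+n_2-2)$, it is enough to exhibit $n_3(n_1+n_2-2)$ resolvers among these columns. To analyse a single column I would invoke Lemma~\ref{lem:alsoresolving}: a vertex $(a,b,c)$ resolves $u$ and $v$ if and only if the vertex of the subgrid $S(u,v)$ closest to it does. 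Writing $(a',b')$ for the clamp of $(a,b)$ into $[x_1,y_1]\times[x_2,y_2]$, this says that the whole tail $c\le x_3$ resolves if and only if $(a',b',x_3)$ does, the whole tail $c\ge y_3$ resolves if and only if $(a',b',y_3)$ does, and the middle segment $x_3\le c\le y_3$ lies on a shortest path between $u$ and $v$, hence carries at most one non-resolver by Lemma~\ref{prop:shortestpath}. Because $y_3-x_3>0$, the two extreme vertices $(a',b',x_3)$ and $(a',b',y_3)$ cannot both fail. Thus every column is of one of three types, contributing at least $n_3-1$, exactly $n_3-1-x_3$, or exactly $y_3$ resolvers according as neither, the bottom, or the top extreme vertex fails to resolve.

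It then remains to bound how many columns are of the two bad types, and here Lemma~\ref{lem:resolvingcapa} is the key ingredient: it guarantees that at least $n_1+n_2-2$ of the top endpoints $(a,b,n_3-1)$ resolve, so at most $n_1+n_2-2$ columns are top-failing, and its reflection in the third coordinate gives the same bound $n_1+n_2-2$ for the bottom-failing columns. Summing the per-column contributions, with $b$ bottom-failing and $t$ top-failing columns, yields a total of at least $(2n_1+2n_2-4)(n_3-1)-b\,x_3-t\,(n_3-1-y_3)$; using $b,t\le n_1+n_2-2$ together with $x_3+(n_3-1-y_3)\le n_3-2$ (a restatement of $x_3<y_3$), this is at least $(2n_1+2n_2-4)(n_3-1)-(n_1+n_2-2)(n_3-2)=n_3(n_1+n_2-2)\ge\alpha_M$, as wanted. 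This computation never uses the parity of $d(u,v)$, so it also subsumes the odd-distance sub-case.

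I expect the genuine obstacle to be precisely this three-coordinate case, and within it the treatment of the column tails $c<x_3$ and $c>y_3$ that lie off every shortest path: there the clean ``at most one non-resolver per column'' estimate behind Lemma~\ref{lem:resolving2capas} collapses, since an entire tail can fail. The device that rescues the count is to charge each off-path tail to the failure of a single extreme vertex via Lemma~\ref{lem:alsoresolving}, and then to cap the number of such bad columns globally through Lemma~\ref{lem:resolvingcapa}. The delicate point will be verifying that the apparent worst case, namely many bottom-failing columns with $x_3$ large \emph{and} many top-failing columns with $y_3$ small, cannot occur simultaneously, which is exactly what the inequality $x_3<y_3$ forbids.
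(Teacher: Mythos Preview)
Your argument is correct. It is, however, a genuinely different proof from the paper's. The paper proceeds by induction on $n_1+n_2+n_3$: in the three-different-coordinates case it peels off one slice of the grid (reducing, say, $n_3$ to $n_3-1$), invokes the induction hypothesis on the smaller grid, and then adds back at least $n_1+n_2-2$ resolvers in the new slice via Lemma~\ref{lem:resolvingcapa}; the boundary case $\{x_3,y_3\}=\{0,n_3-1\}$ is handled separately through Lemma~\ref{lem:resolving2capas}. Your proof is instead a direct count: you slice the four side faces into the $2n_1+2n_2-4$ columns $T_{a,b}$, classify each column by whether its clamped bottom or top endpoint in $S(u,v)$ fails, compute the per-column contribution in each case, and then use Lemma~\ref{lem:resolvingcapa} (and its reflection) once, globally, to cap the number of bad columns. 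This avoids induction entirely and absorbs Lemma~\ref{lem:resolving2capas} as the special case $x_3=0$, $y_3=n_3-1$, where both tails are empty and every column already gives $n_3-1$. The trade-off is that the paper's induction is slightly more modular, while your argument is self-contained and makes the role of the constraint $x_3<y_3$ more transparent in the final inequality $x_3+(n_3-1-y_3)\le n_3-2$.
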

\begin{proof}
	We proceed by induction on $h=n_1+n_2+n_3\ge 6$.
	
	If $h=6$, then $n_1=n_2=n_3=2$ and $\alpha_M(n_1,n_2,n_3)=4$. Besides, all the vertices of the grid are in $F(2,2,2)$ and it is easy to check that, for every pair of vertices $u$ and $v$, there are at least 4 vertices resolving them.
	
	Now we prove that the statement of the lemma holds whenever $n_1+n_2+n_3=h$, assuming that it is true for grids $P_{n_1'}\Box P_{n_2'}\Box P_{n_3'}$ such that $n_1'+n_2'+n_3'\le h-1$. 
	Let $u$ and $v$ be two different vertices of $P_{n_1}\Box P_{n_2}\Box P_{n_3}$ and 
	let $R_{n_1,n_2,n_3}(u,v)$ denote the set of vertices of $F(n_1,n_2,n_3)$ resolving $u$ and $v$.
	If $u$ and $v$ have at least one equal coordinate, then $|R_{n_1,n_2,n_3}(u,v)|\ge \alpha_M(n_1,n_2,n_3)$ by Lemmas~\ref{lem:resolvingrecta} and ~\ref{lem:resolvingpla}.
	Now suppose that the three coordinates of $u$ and $v$ are distinct.
	We may assume $n_1\le n_2\le n_3$.
	Suppose first that  $n_1<n_3$. 
	In such a case, $n_1+n_2+n_3-1=h-1$. If $u,v$ are in $P_{n_1}\Box P_{n_2}\Box P_{n_3-1}$, then
	by induction hypothesis and using Lemmas~\ref{lem:alsoresolving} and \ref{lem:resolvingcapa}, 
	
	\vspace{-5mm}
	\begin{align*}
	|R_{n_1,n_2,n_3}(u,v)|&\ge |R_{n_1,n_2,n_3-1}(u,v)| + (n_1+n_2-2)\\&\ge \alpha_M(n_1,n_2,n_3-1) +(n_1+n_2-2)\\&=(n_2+(n_3-1)-2)n_1+(n_1+n_2-2)\\&=(n_2+n_3-2)n_1+n_2-2\\&\ge (n_2+n_3-2)n_1=\alpha_M(n_1,n_2,n_3).
	\end{align*}
	If no both vertices $u$ and $v$ are included in a grid $P_{n_1}\Box P_{n_2}\Box P_{n_3-1}$, by symmetry it is enough to consider the case
	$u=(x_1,x_2,0)$ and $v=(y_1,y_2,n_3-1)$.
	By Lemma~\ref{lem:resolving2capas}, 
	$|R_{n_1,n_2,n_3}(u,v)|\ge n_3(n_1+n_2-2)\ge \alpha_M(n_1,n_2,n_3)$.
	
	Now suppose $n_1=n_3$, that is, $n_1=n_2=n_3=n$.
	Arguing as in the preceding case, if $u,v$ belong to  $P_{n}\Box P_{n}\Box P_{n-1}$, then
	
	\vspace{-5mm}
	\begin{align*}
	|R_{n,n,n}(u,v)|&\ge |R_{n,n,n-1}(u,v)| + (2n-2)\\&\ge \alpha_M(n,n,n-1) +(2n-2)\\&=(2n-2)(n-1)+(2n-2)\\&=(2n-2)n=\alpha_M(n,n,n)
	\end{align*}
	Otherwise, assume that $u=(x_1,x_2,0)$ and $v=(y_1,y_2,n-1)$,
	and by Lemma~\ref{lem:resolving2capas}, 
	$|R_{n,n,n}(u,v)|\ge n(2n-2)= \alpha_M(n,n,n)$. 	
\end{proof}




\section{\kfftrss{} in 3D grids}\label{sec:3Dgrids}

Recall that a resolving set $S$ of a graph remains resolving after the deletion of any $k$ vertices if and only if $S$ is a $(k+1)$-resolving set of $G$, and the minimum cardinality of such a set is the $(k+1)$-metric dimension of $G$.
 In this section we provide exact values and bounds on the  \kftmd{} of 3D grids.
Moreover,  in almost all cases, a $(k+1)$-metric basis, that is, a $(k+1)$-resolving set of minimum cardinality, is constructed. 

Let $\alpha_m(n_1,n_2,n_3)$
be defined as follows:
	
\vspace{-5mm}
\begin{align*}
\alpha_m(n_1,n_2,n_3)&=2(n_1+n_2+n_3)-8, \nonumber 
\end{align*}
and recall the definition of $\alpha_M(n_1,n_2,n_3)$ given in the preceding section:
	
\vspace{-5mm}
\begin{align*}
	\alpha_M(n_1,n_2,n_3)&=\min \{ n_1(n_2+n_3-2),n_2(n_1+n_3-2) ,n_3(n_1+n_2-2) \}.
\end{align*} 
From now on, we assume $n_1,n_2,n_3\ge 2$ and $k\ge 2$.
Note that $\alpha_m$ and $\alpha_M$ are half the number of vertices lying on an ``edge'' of the grid and half the number of vertices lying on the four smallest faces of the grid, respectively (see Figure~\ref{fig:alphas}).

\begin{figure}[t!]
	\centering
	\includegraphics[width=0.6\textwidth]{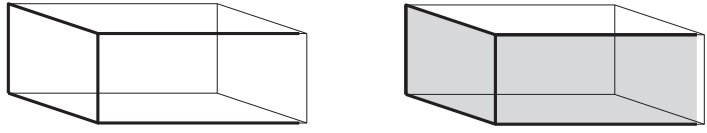}
	\caption{Left, $\alpha_m$ is the number of vertices lying on the thicker edges of the grid. Right, $\alpha_M$ is the number of vertices lying on  the gray faces of the grid.}
	\label{fig:alphas}
\end{figure}

\begin{prop}\label{prop:nokftrs-3grid} 
	If $k\ge \alpha_M(n_1,n_2,n_3)$, then	
	$P_{n_1}\Box P_{n_2}\Box P_{n_3}$ has no \kftrs{}.
\end{prop}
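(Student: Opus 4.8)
The plan is to invoke Remark~\ref{prop:nonexistencekftrs}, which says that $G=P_{n_1}\Box P_{n_2}\Box P_{n_3}$ has a $(k+1)$-resolving set if and only if $V(G)$ itself is one. So it suffices to show that $V(G)$ is \emph{not} a $(k+1)$-resolving set, i.e. to exhibit a single pair of vertices $u,v$ that is resolved by at most $k$ vertices of the whole grid. Because $k\ge \alpha_M(n_1,n_2,n_3)$, it is enough to produce a pair resolved by exactly $\alpha_M(n_1,n_2,n_3)$ vertices: such a pair is then resolved by at most $k<k+1$ distinct vertices, so no $(k+1)$-resolving set can exist. Note Proposition~\ref{prop:kftrsgap} already guarantees that every pair is resolved by at least $\alpha_M$ vertices, so the pair I seek must attain this lower bound with equality.

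By the symmetry of the grid under permuting the three coordinates I would assume $n_1=\min\{n_1,n_2,n_3\}$, so that $\alpha_M(n_1,n_2,n_3)=n_1(n_2+n_3-2)$ by the identity recorded just after the definition of $\alpha_M$. The pair I would choose is
\[
u=(0,0,1),\qquad v=(0,1,0),
\]
which agree in the first coordinate and differ by one in each of the second and third. The reason for this choice is that the first coordinate is ``free'': for any $z=(z_1,z_2,z_3)$ the term $z_1$ contributes equally to $d(z,u)$ and $d(z,v)$ and therefore cancels, so whether $z$ resolves $u$ and $v$ depends only on $(z_2,z_3)$.

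The core of the argument is then a short one-dimensional computation. Setting $g(t)=t-|t-1|$, so that $g(0)=-1$ and $g(t)=1$ for $t\ge 1$, I would verify that $d(z,u)-d(z,v)=g(z_2)-g(z_3)$. Hence $z$ fails to resolve $u$ and $v$ exactly when $g(z_2)=g(z_3)$, that is, when $z_2,z_3\ge 1$ or $z_2=z_3=0$. Counting these non-resolving vertices with $z_1$ ranging freely gives $n_1\bigl[(n_2-1)(n_3-1)+1\bigr]$, so the number of resolving vertices is
\[
n_1n_2n_3-n_1\bigl[(n_2-1)(n_3-1)+1\bigr]=n_1(n_2+n_3-2)=\alpha_M(n_1,n_2,n_3),
\]
which is at most $k$, completing the proof.

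I expect the only genuine difficulty to be locating the extremal pair, not checking it. The obvious candidates all fail: a pair differing in a single coordinate, or an ``aligned'' pair such as $(0,0,0)$ and $(n_1-1,n_2-1,0)$, has a bisector that is just a thin slice and is therefore resolved by cubically many vertices, far more than $\alpha_M$. What makes the ``anti-diagonal'' pair $(0,1)$--$(1,0)$ (placed in a $2\times 2$ corner of a face, with the smallest side left free) work is that its $L_1$-bisector is unusually fat: within each $(z_2,z_3)$-layer it covers all but the $n_2+n_3-2$ vertices on the two boundary edges through the corner, and the free coordinate then multiplies this by $n_1$. This is precisely the configuration for which the bound of Proposition~\ref{prop:kftrsgap} is tight, and once the pair is written down the remaining estimate is the routine count above.
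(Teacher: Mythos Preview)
Your proof is correct, and at bottom it is the same argument as the paper's, though the two are packaged differently. The paper assumes $n_3=\min\{n_1,n_2,n_3\}$ and argues by contradiction via Lemma~\ref{lem:regiones3D}: with $a_1=a_2=1$, the set $V_2=\{z:\textrm{pr}_3(z)\in R_{-+}(1,1)\cup R_{+-}(1,1)\}$ has exactly $(n_1+n_2-2)n_3=\alpha_M\le k$ vertices, so deleting $S\cap V_2$ from any putative $(k+1)$-resolving set $S$ leaves a set whose projection lies in $R_{--}(1,1)\cup R_{++}(1,1)$, which by Lemma~\ref{lem:regiones3D} fails to resolve the pair $(0,1,0),(1,0,0)$. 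After relabelling so that your smallest side is their $n_3$, your extremal pair $(0,0,1),(0,1,0)$ becomes exactly this pair, and your set of resolving vertices is precisely $V_2$; so the content is identical. What differs is the entry point: the paper invokes the fault-tolerance formulation and the prepackaged Lemma~\ref{lem:regiones3D}, while you go through Remark~\ref{prop:nonexistencekftrs} and do the explicit $g(t)=t-|t-1|$ count by hand. Your route is a touch more self-contained (it does not need Lemma~\ref{lem:regiones3D}) and makes visible that the bound of Proposition~\ref{prop:kftrsgap} is attained with equality by this pair; the paper's route is shorter because the counting is already encapsulated in the earlier lemma.
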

\begin{proof}
	Assume without loss of generality that $\alpha_M =(n_1+n_2-2)n_3$.
	Suppose to the contrary that $S\subseteq V=V(P_{n_1}\Box P_{n_2}\Box P_{n_3})$ is a {\kftrs}. Consider the partition $\{ V_1,V_2 \}$ of $V$, where $V_1=\{ u\in V: \textrm{pr}_3(u)\in R_{--}(1,1)\cup R_{++}(1,1) \}$ and $V_2=\{ u\in V: \textrm{pr}_3(u)\in R_{-+}(1,1)\cup R_{+-}(1,1) \}$. 
	Then, $|V_2|=(n_1+n_2-2)n_3=\alpha_M(n_1,n_2,n_3) \le k$.  Hence, by removing from $S$ the at most $k$ vertices belonging to $V_2$ we have a set of vertices of $P_{n_1}\Box P_{n_2}\Box P_{n_3}$ that is not resolving by Lemma~\ref{lem:regiones3D}, which is a contradiction.
\end{proof}

\begin{prop}\label{prop:bound3D} 
	If $P_{n_1}\Box P_{n_2}\Box P_{n_3}$ has a \kftrs{}, then	
			
	\vspace{-5mm}
	\begin{equation*}
	\dim_{k+1}(P_{n_1}\Box P_{n_2}\Box P_{n_3})\ge  2k+2.
	\end{equation*}

\end{prop}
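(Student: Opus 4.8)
The plan is to exploit Lemma~\ref{lem:regiones3D} to pin down, for a suitable choice of two pairs of vertices, the regions in which the resolving vertices must lie, and then to count using the fact that these regions are disjoint. Note first that the naive approach via Proposition~\ref{prop:boundfacil} together with $\dim(P_{n_1}\Box P_{n_2}\Box P_{n_3})=3$ (Theorem~\ref{thm:basisGrid3D}) only gives $\dim_{k+1}(P_{n_1}\Box P_{n_2}\Box P_{n_3})\ge k+3$, which is far from $2k+2$; so a sharper, region-based argument is required.

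Let $S$ be any $(k+1)$-resolving set. Since $n_1,n_2\ge 2$, I can fix $a_1\in\{1,\dots,n_1-1\}$ and $a_2\in\{1,\dots,n_2-1\}$ (for instance $a_1=a_2=1$). The two pairs I would use are exactly those produced in the proof of Lemma~\ref{lem:regiones3D}: the pair $P_1=\{(a_1-1,a_2,0),(a_1,a_2-1,0)\}$ and the pair $P_2=\{(a_1-1,a_2-1,0),(a_1,a_2,0)\}$, both consisting of distinct vertices.

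The key step is to identify where the vertices resolving each pair must project. I claim that every vertex resolving $P_1$ has its projection in $R_{-+}(a_1,a_2)\cup R_{+-}(a_1,a_2)$, while every vertex resolving $P_2$ has its projection in $R_{--}(a_1,a_2)\cup R_{++}(a_1,a_2)$. This follows by applying Lemma~\ref{lem:regiones3D} to singletons: if some vertex $z$ with $\textrm{pr}_3(z)\in R_{--}(a_1,a_2)\cup R_{++}(a_1,a_2)$ resolved $P_1$, then the set $\{z\}$ would satisfy the hypothesis $\textrm{pr}_3(\{z\})\subseteq R_{--}(a_1,a_2)\cup R_{++}(a_1,a_2)$ and yet resolve $P_1$, contradicting the conclusion of the lemma; the argument for $P_2$ is symmetric. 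Equivalently, a direct computation shows that $z=(z_1,z_2,z_3)$ resolves $P_1$ if and only if $|z_1-a_1+1|+|z_2-a_2|\ne|z_1-a_1|+|z_2-a_2+1|$, which holds precisely when $\textrm{pr}_3(z)\in R_{-+}(a_1,a_2)\cup R_{+-}(a_1,a_2)$.

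To conclude, I use that $S$ is $(k+1)$-resolving, so each of $P_1$ and $P_2$ is resolved by at least $k+1$ vertices of $S$. Let $A\subseteq S$ be the vertices of $S$ resolving $P_1$ and $B\subseteq S$ the vertices of $S$ resolving $P_2$; by the previous step $\textrm{pr}_3(A)\subseteq R_{-+}(a_1,a_2)\cup R_{+-}(a_1,a_2)$ and $\textrm{pr}_3(B)\subseteq R_{--}(a_1,a_2)\cup R_{++}(a_1,a_2)$. Since the four regions $R_{--},R_{++},R_{-+},R_{+-}$ partition $V(P_{n_1}\Box P_{n_2})$, the sets $A$ and $B$ are disjoint, whence $|S|\ge|A|+|B|\ge(k+1)+(k+1)=2k+2$. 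The only delicate point is the region-identification step, and it is immediate from Lemma~\ref{lem:regiones3D}; everything else is a disjointness count.
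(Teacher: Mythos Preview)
Your proof is correct and is essentially the same argument as the paper's, just presented directly rather than by contradiction: both hinge on the partition of $V(P_{n_1}\Box P_{n_2})$ into $R_{--}\cup R_{++}$ and $R_{-+}\cup R_{+-}$ together with Lemma~\ref{lem:regiones3D}, and both conclude that $S$ must have at least $k+1$ vertices projecting into each part. The paper argues by pigeonhole (if $|S|\le 2k+1$, one part contains at most $k$ vertices of $S$, and removing them leaves a non-resolving set), while you identify the two witnessing pairs $P_1,P_2$ explicitly and count their resolvers; the content is the same.
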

\begin{proof}
	We prove that there is no {\kftrs} of cardinality at most $2k+1$.
	Let $a_1=\lfloor n_1/2 \rfloor$, $a_2=\lfloor n_2/2 \rfloor$ and suppose that $S\subseteq V(P_{n_1}\Box P_{n_2}\Box P_{n_3})$ is a {\kftrs} of cardinality at most $2k+1$. Consider the partition $\{ V_1,V_2 \}$ of the set  $V=V(P_{n_1}\Box P_{n_2}\Box P_{n_3})$ such that $V_1=\{ (i,j,h)\in V :  (i,j)\in R_{--}(a_1,a_2)\cup R_{++}(a_1,a_2) \}$ and $V_2=\{ (i,j,h) \in V: (i,j)\in R_{-+}(a_1,a_2)\cup R_{+-}(a_1,a_2)\}$. By the Pigeonhole Principle, at least one of the sets $V_1$ or $V_2$ has at most $k$ vertices of $S$. Hence, by removing these vertices from $S$ we have a set of vertices of $P_{n_1}\Box P_{n_2}\Box P_{n_3}$ that is not resolving by Lemma~\ref{lem:regiones3D}, which leads to a contradiction.
\end{proof}

\begin{prop}\label{prop:bound3Deven}  
If $P_{n_1}\Box P_{n_2}\Box P_{n_3}$ has a \kftrs{} and 	
$k$ is even, then 

\vspace{-5mm}
\begin{equation*}
\dim_{k+1}(P_{n_1}\Box P_{n_2}\Box P_{n_3})\ge  2k+3.
\end{equation*}

\end{prop}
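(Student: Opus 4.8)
The plan is to sharpen Proposition~\ref{prop:bound3D} by ruling out a \kftrs{} of cardinality exactly $2k+2$; combined with Proposition~\ref{prop:bound3D} this gives $\dim_{k+1}\ge 2k+3$. So I would assume, for contradiction, that $S$ is a \kftrs{} of $P_{n_1}\Box P_{n_2}\Box P_{n_3}$ with $|S|=2k+2$, and exploit that $k$ even makes $k+1$ odd.

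First, fix thresholds $a_1\in\{1,\dots,n_1-1\}$ and $a_2\in\{1,\dots,n_2-1\}$ and set $A=\#\{s\in S:\textrm{pr}_3(s)\in R_{--}(a_1,a_2)\cup R_{++}(a_1,a_2)\}$ and $B=\#\{s\in S:\textrm{pr}_3(s)\in R_{-+}(a_1,a_2)\cup R_{+-}(a_1,a_2)\}$, so $A+B=|S|$. By Lemma~\ref{lem:regiones3D}, any vertex whose projection lies in $R_{-+}\cup R_{+-}$ fails to resolve the pair $(a_1-1,a_2-1,0),(a_1,a_2,0)$, hence this pair is resolved by at most $A$ vertices of $S$; since $S$ is $(k+1)$-resolving, $A\ge k+1$. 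Symmetrically, the pair $(a_1-1,a_2,0),(a_1,a_2-1,0)$ is resolved by at most $B$ vertices, forcing $B\ge k+1$. As $A+B=2k+2$, both equal $k+1$, which is odd because $k$ is even.

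Next I would compute $A$ modulo $2$. A projected vertex lies in the straight quadrants $R_{--}\cup R_{++}$ exactly when $[s_1\ge a_1]=[s_2\ge a_2]$, and the indicator of this equality of bits is $\equiv 1+[s_1\ge a_1]+[s_2\ge a_2]\pmod 2$. Summing over $s\in S$ yields $A\equiv |S|+\#\{s:s_1\ge a_1\}+\#\{s:s_2\ge a_2\}\pmod 2$; since $|S|$ is even and $A$ is odd, $\#\{s:s_1\ge a_1\}+\#\{s:s_2\ge a_2\}\equiv 1\pmod 2$. The identical reasoning applied to the projections $\textrm{pr}_1$ and $\textrm{pr}_2$ (using the symmetric versions of Lemma~\ref{lem:regiones3D} noted just after it) gives the analogous congruences for the coordinate pairs $(s_2,s_3)$ and $(s_1,s_3)$.

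Finally I would evaluate the three congruences at threshold $1$. Writing $N_i=\#\{s\in S:s_i\ge 1\}$, this gives $N_1+N_2\equiv 1$, $N_2+N_3\equiv 1$ and $N_1+N_3\equiv 1 \pmod 2$; adding them produces $2(N_1+N_2+N_3)\equiv 3\equiv 1\pmod 2$, i.e. $0\equiv 1$, a contradiction. The main obstacle is the bookkeeping of the second paragraph: one must verify that each ``crossing'' pair is resolved only by vertices projecting into the opposite pair of diagonal quadrants (so that both $A\ge k+1$ and $B\ge k+1$), and one must use all three projection directions at once, since any single direction only reproduces the weaker bound $2k+2$ — it is precisely the mod-$2$ incompatibility of the three tail-count parities that delivers the extra unit when $k$ is even.
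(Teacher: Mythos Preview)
Your proof is correct. Like the paper's, it hinges on applying Lemma~\ref{lem:regiones3D} (and its symmetric variants for $\textrm{pr}_1$ and $\textrm{pr}_2$) in all three coordinate directions to force each diagonal-quadrant count to equal exactly $k+1$ when $|S|=2k+2$. The difference is in how the contradiction is extracted. The paper fixes all three thresholds simultaneously, partitions $S$ into the eight resulting octants $A_1,A_2,B_1,B_2,C_1,C_2,D_1,D_2$, and manipulates the resulting linear equations among their sizes to obtain $|S|=4q$ for some integer $q$, contradicting $k+1$ odd. You instead work purely modulo~$2$: the identity $[\,b_1=b_2\,]\equiv 1+b_1+b_2\pmod 2$ converts the three diagonal constraints directly into $N_i+N_j\equiv 1\pmod 2$ for each pair $\{i,j\}\subset\{1,2,3\}$, and summing gives $0\equiv 1$. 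Your route is slicker and sidesteps the explicit octant bookkeeping; the paper's version makes the underlying linear structure (four equal ``antipodal'' pair-sums, hence $4\mid |S|$) more visible. One small remark: when you invoke Lemma~\ref{lem:regiones3D} to say that a \emph{single} vertex with projection in $R_{-+}\cup R_{+-}$ fails to resolve the pair $(a_1-1,a_2-1,0),(a_1,a_2,0)$, you are applying the lemma to a singleton set (equivalently, quoting its proof); this is fine but worth making explicit.
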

\begin{proof}
	Let $V=V(P_{n_1}\Box P_{n_2}\Box P_{n_3})$ and let $a_1=\lfloor n_1/2 \rfloor$, $a_2=\lfloor n_2/2 \rfloor$, and $a_3=\lfloor n_3/2 \rfloor$. Suppose that $S\subseteq V$ is a {\kftrs} of cardinality at most $2k+2$. 
Consider the sets of $V$:

\vspace{-5mm}
{\small
\begin{align*}
R_{---}(a_1,a_2,a_3)&=\{ (x_1,x_2,x_3): 0\le x_1< a_1, 0\le x_2< a_2 , 0\le x_3 <a_3\};\\
R_{--+}(a_1,a_2,a_3)&=\{ (x_1,x_2,x_3): 0\le x_1< a_1, 0\le x_2< a_2 , a_3\le x_3 <n_3\};\\
R_{+--}(a_1,a_2,a_3)&=\{ (x_1,x_2,x_3): a_1\le x_1<n_1, 0\le x_2< a_2 , 0\le x_3 <a_3\};\\
R_{+-+}(a_1,a_2,a_3)&=\{ (x_1,x_2,x_3): a_1\le x_1<n_1, 0\le x_2< a_2 ,a_3\le x_3 <n_3\};\\
R_{-+-}(a_1,a_2,a_3)&=\{ (x_1,x_2,x_3): 0\le x_1< a_1, a_2\le x_2<n_2 , 0\le x_3 <a_3\};\\
R_{-++}(a_1,a_2,a_3)&=\{ (x_1,x_2,x_3): 0\le x_1< a_1, a_2\le x_2<n_2,a_3\le x_3 <n_3 \};\\
R_{++-}(a_1,a_2,a_3)&=\{ (x_1,x_2,x_3): a_1\le x_1<n_1, a_2\le x_2<n_2 , 0\le x_3 <a_3 \};\\
R_{+++}(a_1,a_2,a_3)&=\{ (x_1,x_2,x_3): a_1\le x_1<n_1, a_2\le x_2<n_2 , a_3\le x_3<n_3 \}.
\end{align*} }

Notice that, by definition, these sets form a partition of $V$. Moreover, if we consider the vertices of the grid as points of the \threedim space, these sets are included in the eight regions defined by the semispaces $x_1<a_1$, $x_1\ge a_1$; $x_2<a_2$, $x_2\ge a_2$; and $x_3< a_3$, $x_3\ge a_3$.  
Consider the following partition of $S$ (see Figure~\ref{fig:cubos}):

\vspace{-5mm}
\begin{align*}
&A_1=S\cap R_{---}(a_1,a_2,a_3), 
&A_2=S\cap R_{--+}(a_1,a_2,a_3),\\
&B_1=S\cap R_{+--}(a_1,a_2,a_3),
&B_2=S\cap R_{+-+}(a_1,a_2,a_3),\\
&C_1=S\cap R_{-+-}(a_1,a_2,a_3),
&C_2=S\cap R_{-++}(a_1,a_2,a_3),\\
&D_1=S\cap R_{++-}(a_1,a_2,a_3),
&D_2=S\cap R_{+++}(a_1,a_2,a_3).\\
\end{align*}

\begin{figure}[t!]
	\centering
	\includegraphics[width=0.5\textwidth]{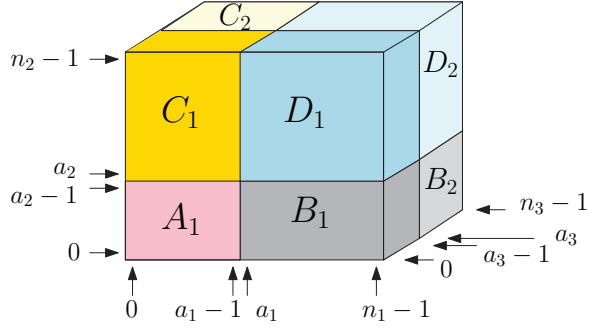}
	\caption{The sets $A_1,A_2,B_1,B_2,C_1,C_2,D_1,D_2$ are the vertices of $S$ included in the regions obtained when partitioning the vertices of $V$ into the eight octants determined by the chosen values of $a_1$, $a_2$ and $a_3$.}
	\label{fig:cubos}
\end{figure}

 If $|A_1|+|A_2|+|D_1|+|D_2|\le k$ or $|B_1|+|B_2|+|C_1|+|C_2|\le k$, then $S$ is not a {\kftrs} because, by Lemma~\ref{lem:regiones3D}, the removal of the at most $k$ vertices of $S$ in $A_1\cup A_2\cup D_1\cup D_2$ or in $B_1\cup B_2\cup C_1\cup C_2$  produces a non-resolving set, which leads to a contradiction.
Hence, 

\vspace{-5mm}
\begin{equation*}
|A_1|+|A_2|+|D_1|+|D_2|\ge  k+1\hbox{ and }|B_1|+|B_2|+|C_1|+|C_2|\ge k+1.
\end{equation*}
But, since 

\vspace{-5mm}
\begin{equation*}
|A_1|+|A_2|+|D_1|+|D_2|+|B_1|+|B_2|+|C_1|+|C_2|=|S|\le 2k+2,
\end{equation*}
we derive $|S|=2k+2$, and 

\vspace{-5mm}
\begin{equation*}
|A_1|+|A_2|+|D_1|+|D_2| =|B_1|+|B_2|+|C_1|+|C_2| = k+1.
\end{equation*}
By symmetry, we derive 

\vspace{-5mm}
\begin{align*}
|A_1|+|B_1|+|C_2|+|D_2| =|A_2|+|B_2|+|C_1|+|D_1| = k+1,\\
|A_1|+|C_1|+|B_2|+|D_2| =|A_2|+|C_2|+|B_1|+|D_1| = k+1.
\end{align*}
Hence,

\vspace{-5mm}
\begin{align}
&|A_1|+|A_2|+|D_1|+|D_2| = k+1\\
&|A_1|+|B_1|+|C_2|+|D_2| =k+1\\
&|A_1|+|C_1|+|B_2|+|D_2|=k+1\\
&|A_2|+|B_2|+|C_1|+|D_1| = k+1
\end{align}
By subtracting Equation (2) from Equation (1), Equation (3) from Equation (1) and Equation (4) from Equation (3), we get

\vspace{-5mm}
\begin{equation*}
|D_1|+|A_2|= |B_1|+|C_2|=|C_1|+|B_2|=|A_1|+|D_2|=q,
\end{equation*}
for some $q\in \mathbb{Z}$ and, consequently,

\vspace{-5mm}
\begin{equation*}
|S|= |A_1|+|A_2|+|B_1|+|B_2|+|C_1|+|C_2|+|D_1|+|D_2|= 4q.
\end{equation*}
Hence, $|S|=2k+2=4q$, which implies that $k+1$ is even, a contradiction.
\end{proof}

Note that the preceding lemma provides an alternative way of proving that the metric dimension of a 3D grid is at least 3 (see Theorem~\ref{thm:basisGrid3D}).

Next, we describe a \kftrs{} of minimum cardinality for some \threedim grids. Bearing this in mind, we define the following $n_1+n_2+n_3-4$ disjoint sets of four vertices:

\vspace{-4mm}
{\small
\begin{align*}
S_{1,i}&=\{ (i,0,0),(i,n_2-1,0),(i,0,n_3-1),(i,n_2-1,n_3-1)\},\hbox{ for }0\le i\le n_1-1,\\
S_{2,j}&=\{ (0,j,0),(n_1-1,j,0), (0,j,n_3-1), (n_1-1,j,n_3-1)\}, \hbox{ for }1\le j\le n_2-2,\\
S_{3,h}&=\{ (0,0,h),(n_1-1,0,h), (0,n_2-1,h),(n_1-1,n_2-1,h)\}, \hbox{ for }1\le h\le n_3-2.
\end{align*} }
\begin{prop}\label{prop:kftrsOdd} 	 
If $k$ is an odd integer and $k<  \alpha_m (n_1,n_2,n_3)$, then	

\vspace{-3mm}
\begin{equation*}
\dim_{k+1}(P_{n_1}\Box P_{n_2}\Box P_{n_3}) =  2k+2.
\end{equation*}
\end{prop}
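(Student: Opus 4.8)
The plan is to obtain the lower bound for free from Proposition~\ref{prop:bound3D} and to prove the matching upper bound by an explicit construction, so that equality holds. Since $k$ is odd, $2k+2=4\cdot\frac{k+1}{2}$ with $\frac{k+1}{2}$ a positive integer, and the hypothesis $k<\alpha_m(n_1,n_2,n_3)=2(n_1+n_2+n_3-4)$ gives $\frac{k+1}{2}\le n_1+n_2+n_3-4$, which is exactly the number of pairwise disjoint four-vertex sets among the $S_{1,i}$, $S_{2,j}$ and $S_{3,h}$. I would therefore let $S$ be the union of any $\frac{k+1}{2}$ of these sets, so that $|S|=2k+2$. It then suffices to prove that $S$ is a $(k+1)$-resolving set: this shows $\dim_{k+1}(P_{n_1}\Box P_{n_2}\Box P_{n_3})\le 2k+2$, the existence of a \kftrs{} lets us apply Proposition~\ref{prop:bound3D} to get $\dim_{k+1}\ge 2k+2$, and the two bounds coincide.

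The core of the argument is the following local estimate: for every pair of distinct vertices $u=(x_1,x_2,x_3)$, $v=(y_1,y_2,y_3)$ and for each of the four-vertex sets, the number of its vertices resolving $u$ and $v$ is either $0$ or at least $2$. For $S_{1,i}$, a vertex $(i,z_2,z_3)$ resolves $u,v$ precisely when $A+g(z_2)+h(z_3)\ne 0$, where $A=|i-x_1|-|i-y_1|$, $g(z_2)=|z_2-x_2|-|z_2-y_2|$ and $h(z_3)=|z_3-x_3|-|z_3-y_3|$. Evaluating at the four corners, and using $g(0)=-g(n_2-1)=x_2-y_2$ and $h(0)=-h(n_3-1)=x_3-y_3$, the four vertices yield the quantities $A\pm p\pm q$ with $p=x_2-y_2$ and $q=x_3-y_3$. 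The identity $(A+p+q)+(A-p-q)=(A-p+q)+(A+p-q)$ shows that if three of these vanish then so does the fourth; hence the number of nonzero values is never exactly $1$. Moreover all four vanish if and only if $A=p=q=0$, i.e.\ if and only if $x_2=y_2$, $x_3=y_3$ and $i=(x_1+y_1)/2$ --- equivalently, $u$ and $v$ differ only in the first coordinate and $i$ is its midpoint. The sets $S_{2,j}$ and $S_{3,h}$ are handled by permuting the roles of the coordinates.

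To conclude I would split into two cases. If $u$ and $v$ differ in at least two coordinates, then by the claim no four-vertex set can have all four of its vertices fail (failing forces a single differing coordinate), so each of the $\frac{k+1}{2}$ chosen sets contributes at least $2$ resolving vertices, giving at least $2\cdot\frac{k+1}{2}=k+1$ in total. If $u$ and $v$ differ in exactly one coordinate, say the first, then the computation above shows that every four-vertex set resolves either all four or none of the pair, and the unique set resolving none is $S_{1,i_0}$ with $i_0=(x_1+y_1)/2$; thus at most one chosen set fails, while the remaining $\frac{k+1}{2}-1$ each contribute $4$, for a total of at least $4\bigl(\tfrac{k+1}{2}-1\bigr)=2k-2$. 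The one delicate point is exactly this last estimate: it yields $2k-2\ge k+1$ exactly when $k\ge 3$, which holds because $k$ is odd and $k\ge 2$. In every case at least $k+1$ vertices of $S$ resolve $u$ and $v$, so $S$ is a $(k+1)$-resolving set, completing the proof.
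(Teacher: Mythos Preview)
Your proof is correct, but it takes a genuinely different route from the paper's argument for the upper bound. The paper shows that $S$ is a $(k+1)$-resolving set by the ``remove $k$ and test'' formulation: after deleting any $k$ vertices from $S$, a pigeonhole argument guarantees that one of the $\tfrac{k+1}{2}$ four-sets still retains at least three of its vertices, and then Lemma~\ref{lem:grid3resset4} (three corners of a slice together with one further vertex at a different level) is invoked to conclude that the surviving set is resolving. Your argument instead works pair by pair: for every $u,v$ you bound from below the number of vertices of each four-set that resolve them, via the elegant observation that the four quantities $A\pm p\pm q$ satisfy $(A+p+q)+(A-p-q)=(A+p-q)+(A-p+q)$, so that exactly one of them being zero is impossible. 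This avoids Lemma~\ref{lem:grid3resset4} entirely and is fully self-contained; it also shows that \emph{any} choice of $\tfrac{k+1}{2}$ of the four-sets (not just the specific nested choice in the paper) yields a $(k+1)$-resolving set. The price is the small case split at the end (two differing coordinates versus one), but your handling of both cases is clean and the delicate inequality $2k-2\ge k+1$ is exactly covered by the standing hypothesis $k\ge 2$ together with $k$ odd.
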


\begin{proof}
By Proposition~\ref{prop:bound3D}, it is enough to construct a \kftrs{} of cardinality $2k+2$.
Let $\alpha =\tfrac{k+1}2$. Note that by hypothesis  
$\alpha=\tfrac{k+1}2\le  n_1+n_2+n_3-4 $, so we can consider the following set $S$ formed by $\alpha $ of the previously defined sets of vertices (see an example in Figure~\ref{fig:kftrs3D}, left):	

\vspace{-3mm}
\begin{align*}
S&=\bigcup_{i=0}^{\alpha-1} S_{1,i}, \hbox{ if }\alpha \le n_1,\\ 
S&=\bigg(\bigcup_{i=0}^{n_1-1} S_{1,i}\bigg)\cup \bigg(\bigcup_{j=1}^{\alpha -n_1} S_{2,j}\bigg),\hbox{ if }n_1< \alpha \le  n_1+n_2-2,\\
S&=\bigg(\bigcup_{i=0}^{n_1-1} S_{1,i}\bigg)\cup \bigg(\bigcup_{j=1}^{n_2-2} S_{2,j}\bigg)\cup \bigg(\bigcup_{h=1}^{\alpha -n_1-n_2+2} S_{3,h}\bigg),\hbox{ if }n_1+n_2-2<\alpha .\end{align*} 
	\begin{figure}[t!]
		\centering
		\includegraphics[width=\textwidth,height=0.13\textheight]{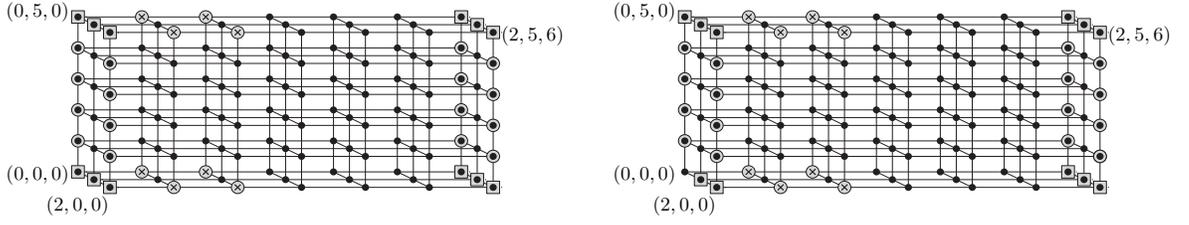}
		\caption{A 18-resolving set (left) and a 17-resolving set (right)  of the grid $P_{3}\Box P_{6}\Box P_{7}$ consists of squared, circled and crossed vertices.
		Squared vertices belong to some set $S_{1,i}$, circled vertices to some $S_{2,j}$, and crossed vertices to some $S_{3,h}$.}
		\label{fig:kftrs3D}
	\end{figure}
Obviously, $|S|=4\, \alpha =2k+2$. We claim that $S$ is a {\kftrs} of $P_{n_1}\Box P_{n_2}\Box P_{n_3}$. Indeed, any subset $S'$ obtained after the removal of $k$ vertices from $S$, contains at least three vertices of either one of the sets $S_{1,i_0}$ for some $0\le i_0\le n_1-1$, 
or $S_{2,j_0}$ for some $1\le j_0\le n_2-2$, or $S_{3,h_0}$, for some $1\le h_0\le n_3-2$, since otherwise we have to remove at least $2\alpha =k+1$ vertices.
By Lemma~\ref{lem:grid3resset4}, there exists a vertex such that together with these three vertices  form a resolving set for $P_{n_1}\Box P_{n_2}\Box P_{n_3}$. Hence, $S$ is a {\kftrs}.
\end{proof}

\begin{prop}\label{prop:kftrsEven} 
	If $k$ is an even integer and $k< \alpha_m(n_1,n_2,n_3)$, then
	
		\vspace{-3mm}
	\begin{equation*}
		\dim_{k+1}(P_{n_1}\Box P_{n_2}\Box P_{n_3}) =  2k+3.
	\end{equation*}
\end{prop}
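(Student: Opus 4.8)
The plan is to prove the two inequalities $\dim_{k+1}(P_{n_1}\Box P_{n_2}\Box P_{n_3})\le 2k+3$ and $\dim_{k+1}(P_{n_1}\Box P_{n_2}\Box P_{n_3})\ge 2k+3$ separately. The lower bound will be essentially free: it follows from Proposition~\ref{prop:bound3Deven} as soon as we know a $(k+1)$-resolving set exists, and such a set is exactly what the construction for the upper bound produces. So the real work lies in the upper bound.

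For the upper bound I would build a $(k+1)$-resolving set of cardinality $2k+3$ by adapting the construction of Proposition~\ref{prop:kftrsOdd}. Writing $\beta=k/2$ (an integer, since $k$ is even), take $\beta$ of the four-vertex slabs $S_{1,i}$, $S_{2,j}$, $S_{3,h}$, chosen in the same order as there (type $1$, then type $2$, then type $3$), together with three of the four vertices of one additional slab; this gives $|S|=4\beta+3=2k+3$. The hypothesis $k<\alpha_m=2(n_1+n_2+n_3-4)$ ensures $\beta+1=\tfrac{k}{2}+1\le n_1+n_2+n_3-4$, so enough slabs are available to form $S$.

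The core step is to check that $S$ is $(k+1)$-resolving, i.e.\ that deleting any $k$ of its vertices leaves a resolving set. After such a deletion, $k+3$ vertices survive; since the $\beta$ full slabs retain at most $2\beta=k$ vertices and the partial slab at most $2$, a pigeonhole count ($k+2<k+3$) forces some slab to retain at least three vertices. Those three are three corners of a face of the grid, and by Lemma~\ref{lem:grid3resset4} — applied in the appropriate coordinate, and using that by the reflective symmetries of the grid any three of the four corners of a face behave like the distinguished three there — they resolve the whole grid provided either the face is a boundary face or some further surviving vertex has, transverse to that face, a coordinate differing from the face's fixed coordinate. If the retaining slab is a type-$1$ slab on a boundary face $x_1\in\{0,n_1-1\}$, the three corners already resolve the grid. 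Otherwise the fixed coordinate is interior, and every vertex of any other slab has, in the transverse direction, a coordinate different from this value (slabs of the other two types carry an extreme coordinate, either $0$ or a maximal value, in that direction, while a distinct slab of the same type has a distinct fixed coordinate); since at most four survivors lie in the retaining slab and $k+3-4=k-1\ge 1$, at least one such vertex survives and serves as the required fourth vertex.

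Combining, the construction certifies existence of a $(k+1)$-resolving set and gives $\dim_{k+1}\le 2k+3$, while Proposition~\ref{prop:bound3Deven} (now applicable, since $k$ is even and a $(k+1)$-resolving set exists) gives $\dim_{k+1}\ge 2k+3$, yielding equality. The main obstacle I expect is the bookkeeping in this verification: keeping the pigeonhole count exact (it is precisely the three extra vertices, producing $k+3$ survivors against a ceiling of $k+2$, that forces a slab with three survivors) and arguing cleanly that a fourth vertex with a differing coordinate always survives, together with the reduction of an arbitrary triple of face corners to the configuration of Lemma~\ref{lem:grid3resset4} via the grid's symmetries.
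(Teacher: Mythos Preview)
Your proof is correct and follows essentially the same approach as the paper: both take $\tfrac{k}{2}+1$ of the four-vertex slabs $S_{1,i},S_{2,j},S_{3,h}$ and drop one vertex to get a set of size $2k+3$, then use a pigeonhole count to guarantee some slab retains three vertices after any $k$ deletions, and finish via Lemma~\ref{lem:grid3resset4}. The only cosmetic difference is that the paper removes $(0,0,0)$ from the first slab whereas you keep three vertices of the last one; you are also more explicit than the paper about the boundary-versus-interior case split and about why a surviving vertex with a different transverse coordinate is available.
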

\begin{proof} By Proposition~\ref{prop:bound3Deven}, it is enough to construct a \kftrs{} of cardinality $2k+3$. Let $\alpha=\tfrac k2 +1$. Observe that, by hypothesis, $\alpha \le n_1+n_2+n_3-4$.
	 Thus, we can consider the following set $S=S^*-\{ (0,0,0) \}$, where $S^*$ is formed by the union of $\alpha $ of the previously defined sets of vertices (see an example in Figure~\ref{fig:kftrs3D}, right):
	 
		\vspace{-5mm}	 
	 \begin{align*}
	 S^*&=\bigcup_{i=0}^{\alpha-1} S_{1,i}, \hbox{ if }\alpha \le n_1,\\ 
	 S^*&=\bigg(\bigcup_{i=0}^{n_1-1} S_{1,i}\bigg)\cup \bigg(\bigcup_{j=1}^{\alpha -n_1} S_{2,j}\bigg),\hbox{ if }n_1< \alpha \le  n_1+n_2-2,\\
	 S^*&=\bigg(\bigcup_{i=0}^{n_1-1} S_{1,i}\bigg)\cup \bigg(\bigcup_{j=1}^{n_2-2} S_{2,j}\bigg)\cup \bigg(\bigcup_{h=1}^{\alpha -n_1-n_2+2} S_{3,h}\bigg),\hbox{ if }n_1+n_2-2<\alpha .\end{align*} 

	Obviously, $|S|=4\, \alpha -1=2k+3$. We claim that $S$ is a {\kftrs} of $P_{n_1}\Box P_{n_2}\Box P_{n_3}$. 
	Indeed, any subset $S'$ obtained after the removal of $k$ vertices from $S$, contains at least three vertices of either one of the sets $S_{1,i_0}$ for some $0\le i_0\le n_1-1$, 
	or $S_{2,j_0}$ for some $1\le j_0\le n_2-2$, or $S_{3,h_0}$, for some $1\le h_0\le n_3-2$, since otherwise we have to remove at least $2(\alpha -1)+1 =k+1$ vertices.
	By Lemma~\ref{lem:grid3resset4}, there exists a vertex such that together with these three vertices  form a resolving set for $P_{n_1}\Box P_{n_2}\Box P_{n_3}$. Hence, $S$ is a {\kftrs}.
\end{proof}

Hence, the exact value of the \kftmd{} has been determined whenever $k< \alpha_m(n_1,n_2,n_3)$. In the remaining cases this parameter is defined, 
Proposition~\ref{prop:kftrsgap} provides us an upper bound on the \kftmd{}.

\begin{cor}\label{prop:kftrsOdd} 	 
	If $k<  \alpha_M(n_1,n_2,n_3) $, then $F(n_1,n_2,n_3)$ is a \kftrs{} of $P_{n_1}\Box P_{n_2}\Box P_{n_3}$ and, hence, 
	
\vspace{-5mm}
\begin{equation*}
\dim_{k+1}(P_{n_1}\Box P_{n_2}\Box P_{n_3}) \le  n_1n_2n_3- (n_1-2)(n_2-2)(n_3-2).
\end{equation*}	
\end{cor}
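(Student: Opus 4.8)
The plan is to recognize that this statement is an essentially immediate consequence of Proposition~\ref{prop:kftrsgap}, which already carries all the technical weight. Recall from the introduction that a set $S$ is a $(k+1)$-resolving set precisely when every pair of distinct vertices is resolved by at least $k+1$ vertices of $S$; equivalently, the distance vectors $r(x|S)$ and $r(y|S)$ differ in at least $k+1$ coordinates for every $x\neq y$. So the entire task reduces to verifying that every pair of vertices of the grid is resolved by at least $k+1$ vertices lying in $F(n_1,n_2,n_3)$.

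First I would fix an arbitrary pair of distinct vertices $u,v\in V(P_{n_1}\Box P_{n_2}\Box P_{n_3})$ and invoke Proposition~\ref{prop:kftrsgap}, which guarantees that at least $\alpha_M(n_1,n_2,n_3)$ vertices of $F(n_1,n_2,n_3)$ resolve $u$ and $v$. The hypothesis $k<\alpha_M(n_1,n_2,n_3)$ is an inequality between integers, so it is equivalent to $k+1\le \alpha_M(n_1,n_2,n_3)$. Combining these two facts, the pair $u$ and $v$ is resolved by at least $k+1$ vertices of $F$. Since $u$ and $v$ were arbitrary, $F(n_1,n_2,n_3)$ is a $(k+1)$-resolving set, which is exactly the assertion that it is a {\kftrs} of the grid.

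For the dimensional bound, I would simply note that the $(k+1)$-metric dimension, being the minimum cardinality of a $(k+1)$-resolving set, is at most the cardinality of any particular one; taking $F(n_1,n_2,n_3)$ and substituting the value $|F(n_1,n_2,n_3)|=n_1n_2n_3-(n_1-2)(n_2-2)(n_3-2)$ recorded in Section~\ref{sec:resolvingvertices} yields the displayed inequality.

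The honest remark here is that there is no genuine obstacle left to overcome: the difficult combinatorial case analysis and the inductive argument establishing the uniform lower bound $\alpha_M$ on the number of resolving vertices were all discharged in Proposition~\ref{prop:kftrsgap} and the lemmas feeding into it. The only point requiring the slightest care is the passage from the strict inequality $k<\alpha_M$ to $k+1\le\alpha_M$, which is valid only because both quantities are integers; I would state this explicitly to make the corollary self-contained.
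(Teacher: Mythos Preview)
Your proposal is correct and matches the paper's approach exactly: the paper presents this corollary without an explicit proof, merely noting in the preceding sentence that Proposition~\ref{prop:kftrsgap} provides the upper bound, which is precisely the mechanism you spell out. Your write-up is in fact more detailed than the paper's treatment, including the remark about integrality justifying $k<\alpha_M \Rightarrow k+1\le \alpha_M$.
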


We can summarize the previous results as follows. 

\begin{theorem} Let $k,n_1,n_2,n_3\ge 2$.
	
	\begin{enumerate}
		\item If  $2\le k< \alpha_m(n_1,n_2,n_3)$, then
		
		\vspace{-4mm}
			\begin{equation*}
				\dim_{k+1}(P_{n_1}\Box P_{n_2}\Box P_{n_3})=\begin{cases} 2k+2, \hbox{ if $k$ is odd;}\\
					2k+3, \hbox{ if $k$ is even.}
				\end{cases}
			\end{equation*}

	\item If  $\alpha_m(n_1,n_2,n_3)\le k< \alpha_M(n_1,n_2,n_3)$, then
	
	\vspace{-4mm}
	\begin{equation*}
	\dim_{k+1}(P_{n_1}\Box P_{n_2}\Box P_{n_3})	\le n_1n_2n_3-(n_1-2)(n_2-2)(n_3-2)
\end{equation*}	

	\item  If  $ \alpha_M(n_1,n_2,n_3)\leq k$, then $P_{n_1}\Box P_{n_2}\Box P_{n_3}$ has no \kftrs{}.
	\end{enumerate}

\end{theorem}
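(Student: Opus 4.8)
The statement is a synthesis of the results already established in this section, so the plan is not to prove anything new but to check that the three ranges of $k$ partition $\{k\ge 2\}$ and to attach the relevant earlier result to each range. The preliminary step I would record is the elementary inequality $\alpha_m(n_1,n_2,n_3)\le \alpha_M(n_1,n_2,n_3)$, which guarantees that the intervals $2\le k<\alpha_m$, $\alpha_m\le k<\alpha_M$ and $\alpha_M\le k$ are pairwise disjoint and jointly exhaust all admissible values. Writing $s=n_1+n_2+n_3$ and $m=\min\{n_1,n_2,n_3\}$, one has $\alpha_M=m(s-m-2)$, and a short factorisation gives $\alpha_M-\alpha_m=(m-2)(s-m-4)$. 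Since $m\ge 2$ and $s-m$ is the sum of the two largest dimensions (each at least $m$), we have $s-m-4\ge 2m-4\ge 0$, so $\alpha_M-\alpha_m\ge 0$. In particular the equality $\alpha_m=\alpha_M$ occurs exactly when $m=2$, which is when case~2 is vacuous.

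For the first item I would split on the parity of $k$. When $k$ is odd with $k<\alpha_m$, the equality $\dim_{k+1}=2k+2$ is precisely the proposition treating the odd case, which pairs the lower bound $\dim_{k+1}\ge 2k+2$ of Proposition~\ref{prop:bound3D} with the explicit $2k+2$-vertex construction assembled from the sets $S_{1,i}$, $S_{2,j}$, $S_{3,h}$. When $k$ is even with $k<\alpha_m$, the equality $\dim_{k+1}=2k+3$ is Proposition~\ref{prop:kftrsEven}, which pairs the sharper even lower bound $\dim_{k+1}\ge 2k+3$ of Proposition~\ref{prop:bound3Deven} with the same construction after deleting one vertex. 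Both hypotheses $k<\alpha_m$ match the condition $k<\alpha_m$ of item~1, so nothing further is required.

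For the second item, the hypothesis $\alpha_m\le k<\alpha_M$ in particular satisfies $k<\alpha_M$, so the corollary asserting that $F(n_1,n_2,n_3)$ is a \kftrs{} applies verbatim and yields the stated upper bound $\dim_{k+1}\le n_1n_2n_3-(n_1-2)(n_2-2)(n_3-2)$. I would emphasise that here only an upper bound is claimed: the lower bounds of Propositions~\ref{prop:bound3D} and~\ref{prop:bound3Deven} still hold but are no longer known to be tight, which is why the value is left undetermined in this regime. For the third item, $\alpha_M\le k$ is exactly the hypothesis of Proposition~\ref{prop:nokftrs-3grid}, which gives the non-existence of any \kftrs{} and hence the undefinedness of $\dim_{k+1}$.

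Because every case reduces to a statement already proved, there is no genuine obstacle beyond bookkeeping; the only point needing an independent (and very short) argument is the inequality $\alpha_m\le\alpha_M$, which is what makes the case analysis simultaneously exhaustive and free of overlap. Assembling the three citations, together with this consistency check, completes the proof.
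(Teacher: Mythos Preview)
Your proposal is correct and matches the paper's approach: the theorem is stated in the paper as a summary (``We can summarize the previous results as follows''), with no separate proof, so each item is indeed obtained directly from Propositions~\ref{prop:kftrsOdd}, \ref{prop:kftrsEven}, the corollary on $F(n_1,n_2,n_3)$, and Proposition~\ref{prop:nokftrs-3grid}, exactly as you indicate. Your explicit verification that $\alpha_m\le\alpha_M$ via the factorisation $(m-2)(s-m-4)$ is a nice addition that the paper leaves implicit (it only remarks afterward that equality holds when $\min\{n_1,n_2,n_3\}=2$).
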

Observe that $\alpha_m=\alpha_M$, whenever $\min\{n_1,n_2,n_3\}=2$. Hence, the value of the \kftmd{}  is completely determined for these cases.

We finish by posing a conjecture about the exact value of the \kftmd{} of 3D grids whenever $\alpha_m\le k< \alpha_M$, based on the ideas used to construct \kftrss{} for $k<\alpha_m$.

\begin{conj} 
If  $\alpha_m(n_1,n_2,n_3)\le k< \alpha_M(n_1,n_2,n_3)$, then 

\vspace{-5mm}
{
\begin{align*}
&\dim_{k+1}(P_{n_1}\Box P_{n_2}\Box P_{n_3})=\\
&=\min\{  4k-2\alpha_m(n_1,n_2,n_3)+4,n_1n_2n_3-(n_1-2)(n_2-2)(n_3-2) \}.
\end{align*}	
}
\end{conj}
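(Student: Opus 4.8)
The plan is to pin down the value in the range $\alpha_m\le k<\alpha_M$ as a pair of matching bounds, namely $\dim_{k+1}(P_{n_1}\Box P_{n_2}\Box P_{n_3})\le\min\{4k-2\alpha_m+4,\ |F|\}$ via two explicit $(k+1)$-resolving sets, and $\dim_{k+1}(P_{n_1}\Box P_{n_2}\Box P_{n_3})\ge\min\{4k-2\alpha_m+4,\ |F|\}$ via a covering argument. I abbreviate $\alpha_m=\alpha_m(n_1,n_2,n_3)$, $\alpha_M=\alpha_M(n_1,n_2,n_3)$ and $F=F(n_1,n_2,n_3)$, and I recall two facts used repeatedly: the full edge-skeleton $\bigcup_i S_{1,i}\cup\bigcup_j S_{2,j}\cup\bigcup_h S_{3,h}$ consists of exactly $\tfrac{\alpha_m}{2}=n_1+n_2+n_3-4$ four-vertex bundles, hence $2\alpha_m$ vertices; and that $F$ is a \kftrs{} of size $|F|=n_1n_2n_3-(n_1-2)(n_2-2)(n_3-2)$ whenever $k<\alpha_M$ (the Corollary just above the summary theorem). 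The $F$ branch of the upper bound is therefore already in hand, so the work is to produce a second resolving set of size $4k-2\alpha_m+4$ and a lower bound matching the minimum.

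For the lower bound I would turn the obstructions of Lemma~\ref{lem:regiones3D} into covering inequalities. If $S$ is $(k+1)$-resolving, then for every interior cut $(a_1,a_2)$ the two witness pairs produced in Lemma~\ref{lem:regiones3D} each need at least $k+1$ resolvers, which forces $|\{z\in S:\mathrm{pr}_3(z)\in R_{--}(a_1,a_2)\cup R_{++}(a_1,a_2)\}|\ge k+1$ and $|\{z\in S:\mathrm{pr}_3(z)\in R_{-+}(a_1,a_2)\cup R_{+-}(a_1,a_2)\}|\ge k+1$, together with the symmetric families coming from $\mathrm{pr}_1$ and $\mathrm{pr}_2$. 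Writing $g_i$ for the column-count function of $S$ along axis $i$, these become a system of ``diagonal half-plane'' lower bounds on the $g_i$, subject to the crucial capacity constraint $g_i\le n_{i'}$ (at most $n_{i'}$ vertices of $S$ share a projected cell, $i'$ being the suppressed coordinate). Minimising $|S|=\sum g_i$ over this system is a transportation/covering problem; ignoring capacities it reproduces the $2k+2$ bound of Proposition~\ref{prop:bound3D} by corner concentration, and the capacities are exactly what force the optimum up: when $k\ge\alpha_m$ the weight can no longer sit in a few corner columns and must spread along the edge-skeleton, yielding the $4k-2\alpha_m+4$ branch, while for $k$ close to $\alpha_M$ the same constraints (sharpened by the resolver counts of Proposition~\ref{prop:kftrsgap}, which show each pair has essentially $\alpha_M$ resolvers confined to $F$) force $S$ to contain almost all of $F$, yielding the $|F|$ branch.

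For the $4k-2\alpha_m+4$ upper bound I would start from the full skeleton and reinforce it, continuing the constructions used for $k<\alpha_m$ (Proposition~\ref{prop:kftrsEven} and its odd analogue). The idea is to adjoin $k-\alpha_m+1$ further resolving four-vertex bundles, giving $|T|=2\alpha_m+4(k-\alpha_m+1)=4k-2\alpha_m+4$. The robustness check is then a clean counting argument: the augmented family has $\tfrac{\alpha_m}{2}+(k-\alpha_m+1)=k-\tfrac{\alpha_m}{2}+1$ bundles, so reducing every bundle to at most two survivors costs $2(k-\tfrac{\alpha_m}{2}+1)=2k-\alpha_m+2$ deletions, which exceeds $k$ precisely because $k\ge\alpha_m-1$; hence after deleting any $k$ vertices some bundle retains at least three of its four corners and some off-level vertex survives elsewhere, and Lemma~\ref{lem:grid3resset4} (in the symmetric form noted after its statement, where any three corners of a level rectangle plus any vertex at a different level resolve the grid) shows the remainder is resolving. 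One then takes whichever of $T$ and $F$ is smaller, matching the lower bound.

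The main obstacle is precisely the gap between the necessary region conditions and genuine $(k+1)$-resolvability, and it bites on both sides. On the construction side, the counting above silently assumes $k-\alpha_m+1$ reinforcing bundles that are resolving four-vertex sets not already contained in the skeleton; but every level-rectangle eligible for Lemma~\ref{lem:grid3resset4} has its corners on the twelve edges, which are already full, so a correct construction must place the reinforcement on the faces and must be accompanied by a new resolving-quadruple lemma (an off-edge strengthening of Lemma~\ref{lem:grid3resset4}) guaranteeing that three surviving face vertices, together with an off-level survivor, still resolve. On the lower-bound side, the hard part is evaluating the capacitated covering optimum exactly and showing it equals $\min\{4k-2\alpha_m+4,\ |F|\}$, including the transition where $4k-2\alpha_m+4=|F|$ and the delicate claim $\dim_{k+1}\ge|F|$ near the non-existence threshold $k=\alpha_M-1$, which truly needs the sharp resolver counts of Proposition~\ref{prop:kftrsgap} rather than the coarse parity/partition arguments of Propositions~\ref{prop:bound3D} and~\ref{prop:bound3Deven}. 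Reconciling these two computations so that the achievable integral set meets the covering optimum for all $(n_1,n_2,n_3)$ and all $k$ in the range is where I expect the real difficulty to lie.
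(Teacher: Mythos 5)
The statement you set out to prove is posed in the paper as an open \emph{conjecture}: the authors give no proof of it, so there is no argument of theirs to compare yours against, and the only question is whether your proposal itself closes the problem. It does not. It is a program rather than a proof, and the two ingredients it leaves open are exactly the mathematical content of the conjecture, as you yourself concede in your final paragraph.

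Concretely, on the upper-bound side: for $k\ge\alpha_m$ the edge skeleton $\bigcup_i S_{1,i}\cup\bigcup_j S_{2,j}\cup\bigcup_h S_{3,h}$ is exhausted, so your $k-\alpha_m+1$ reinforcing bundles must live strictly inside faces, and your counting argument then needs the claim that any three survivors of such a face quadruple, plus one off-level survivor, resolve the grid. This is not just unproved in the paper --- for arbitrary face quadruples it is \emph{false}: take a small axis-aligned rectangle interior to a face; three of its vertices together with a badly placed fourth vertex can all have $\textrm{pr}_3$ inside $R_{--}(a_1,a_2)\cup R_{++}(a_1,a_2)$ for a suitable cut $(a_1,a_2)$, hence fail to resolve by Lemma~\ref{lem:regiones3D}. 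So the construction requires both a careful choice of face bundles and a genuinely new resolving-quadruple lemma (an off-edge strengthening of Lemma~\ref{lem:grid3resset4}), neither of which you supply; without them the bound $4k-2\alpha_m+4$ is not established.

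On the lower-bound side, the covering system you extract from Lemma~\ref{lem:regiones3D} records only \emph{necessary} conditions for $(k+1)$-resolvability, and you never evaluate its capacitated optimum, let alone show it equals $\min\{4k-2\alpha_m+4,\ |F|\}$; there is no a priori reason the optimum of that relaxation reaches the conjectured value. Worse, your appeal to Proposition~\ref{prop:kftrsgap} for the $|F|$ branch points in the wrong direction: that proposition is a \emph{lower} bound on the number of resolvers of every pair (it is what yields the upper bound $\dim_{k+1}\le |F|$ in the corollary), whereas proving $\dim_{k+1}\ge |F|$ near $k=\alpha_M-1$ would require exhibiting pairs with \emph{few} resolvers, i.e.\ sharp upper bounds on resolver counts, which appear nowhere in the paper. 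Note also that a minimum $(k+1)$-resolving set need not be contained in $F$, since interior vertices do resolve pairs, so ``forcing $S$ to contain almost all of $F$'' would additionally require ruling out interior vertices. Both halves of your matching-bounds plan are therefore open, which is consistent with the authors leaving the statement as a conjecture.
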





\section{Acknowledgments}

M. Mora is supported by projects 
H2020-MSCA-RISE-2016-734922 CONNECT,
PID2019-104129GB-I00/MCIN/AEI/10.13039/501100011033
of the Spanish Ministry of Science and Innovation
and Gen.Cat. DGR2017SGR1336; 
M. J. Souto-Salorio is supported by project PID2020-113230RB-C21 of the Spanish Ministry of Science and Innovation.





\begin{thebibliography}{55}
	
	\bibitem{AAA17}
	 Al Salti, F., Alzeidi, N., Arafeh, B.R.:
	EMGGR: an energy-efficient multipath grid-based geographic routing protocol for underwater wireless sensor networks. Wirel.  Netw. {\bf 23}, 1301-1314 {(2017)}  
	
	
	\bibitem{BGY19} 
	Bailey, R.F., Yero, I.G.:
	Error-Correcting codes from k-resolving sets.
	{Discuss. Math. Graph Theory} {\bf 39}, 341--355  (2019) 
	
	
	\bibitem{BEEHHMR06}
	Beerliova, Z., Eberhard, F., Erlebach, T., Hall, A., Hoffman, M., Mihal\'ak, M., Ram, L.S.:
	Network discovery and verification.
	{ IEEE J. Sel. Areas Commun.} {\bf 24}, 2168--2181  (2006) 
	
	\bibitem{CHMPPSW07}
	C\'aceres, J., Hernando, C., Mora, M., Pelayo, I.M.,  Puertas, M.L., Seara, C. and Wood, D.R.:
	On the Metric Dimension of Cartesian Products of Graphs.
	{SIAM J. Discrete Math.} {\bf 21}, 423--441  (2007) 
	
	
	\bibitem{CEJO00}
	Chartrand, G., Eroh, L., Johnson, M.A., Oellermann, O.R.:
	Resolvability in graphs and the metric dimension of a graph.
	{Discrete Appl. Math. } {\bf 105},  99--113  (2000) 
	
	\bibitem{CJS10}
	Chaudhry, M.A., Javaid, I., M. Salman, M.: Fault-tolerant metric and partition dimension of
	graphs.
	 {Util. Math.} {\bf 83},  187--199 (2010) 
	
	
	\bibitem{C83}
	Chv\'atal, V.:
	Mastermind.
	{Combinatorica} {\bf 3}, 325--329 (1983)
	
	
	\bibitem{DAAT17}
	Day, K., Al-Moqbali, H., Alzeidi, N., Touzene, A.:
	TBR: Tree-Based Routing over a 3D Grid for Underwater Wireless Sensor Networks.
	{Journal of Communications } {\bf 12 (10)}, 579--584  (2017)
	
	\bibitem{EMRVY14} 
	Estrada-Moreno, A., Yero, I.G., Rodr\'iguez-Vel\'azquez, J.A.: k-metric resolvability in graphs.
	{Electron. Notes Discrete Math.} {\bf 46}, 121--128 (2014) 
	
	\bibitem{EMRVY15} 
     Estrada-Moreno, A., Rodr\'iguez-Vel\'azquez, J.A.,  Yero, I.G.: The k-metric dimension of a graph.
	{Appl. Math. Inf. Sci.} {\bf 9}, 2829--2840 (2015) 
		
	\bibitem{EMRVY16cor} 
			 Estrada-Moreno, A.,  Yero, I.G., Rodr\'iguez-Vel\'azquez, J.A.:  The k-metric dimension
of corona product graphs. Bull. Malays. Math. Sci. Soc.
{\bf 39 (1)}, 135--156 (2016)
	
	\bibitem{EMRVY16lex} 
	 Estrada-Moreno, A.,  Yero, I.G., Rodr\'iguez-Vel\'azquez, J.A.: The k-metric dimension
of the lexicographic product of graphs. Discrete Math. {\bf 339 (7)}, 1924--1934 (2016)

	
	\bibitem{HM76}
	Harary, F., Melter, R.A.:
	On the metric dimension of a graph.
	{Ars Combin.}  {\bf 2}, 191--195 (1976)
	
	\bibitem{HMSW} 
    Hernando, C., Mora, M.,  Slater, P.,   Wood, D.R.:  Fault-tolerant metric dimension. Ramanujan 
	Mathematical Society, Lecture Note Series, {\bf 5}, 81--85 (2008) 
	
	\bibitem{JSCS09} 
	 Javaid, I.,  Salman, M.,  Chaudhry, M.A.,   Shokat, S.:
	Fault-tolerance in resolvability.
	Util. Math. {\bf 80},  263--275  (2009)

	\bibitem{JP19}
	Jiang, Z., Polyanskii, N.: On the metric dimension of Cartesian powers of a graph. J. Combin. Theory Ser. A {\bf 165}, 1--14  (2019)
		
	\bibitem{KRR96}
	Khuller, S.,  Raghavachari, B.,  Rosenfeld,  A.:
	Landmarks in graphs.
	Discrete Appl. Math. {\bf 70},  217--229 (1996)

	\bibitem{KRT21}
	Klav\v{z}ar, S., Rahbarnia, F., Tavakoli, M.: Some binary products and integer linear
programming for computing k-metric dimension of graphs. Preprint, arXiv:2101.10012  (2021)
	
	\bibitem{KR16}
	Krishnan, S.,  Rajan, B.:
	Fault-Tolerant Resolvability of Certain Crystal Structures.
	Applied Mathematics SCIRP {\bf 7},  599--604 (2016) 
	
	\bibitem{KY21}
	Kuziak, D.,  Yero, I.G.: Metric dimension related parameters in graphs: A survey on combinatorial, computational and applied results. Preprint, arXiv:2107.04877   (2021)

	\bibitem{RHP19}
	Raza, H., Hayat, S., Pan, XF.: On the fault-tolerant metric dimension of certain
	interconnection networks.	
	 J. Appl. Math. Comput.	{\bf 60}, 517--535 (2019)
	
	\bibitem{SVW21}
	Schmitz, Y.,  Vietz, D., Wanke, E.:
	A note on the complexity of k-Metric Dimension.
	Preprint,
	arXiv:2101.12018  (2021)
	%
	
	\bibitem{SS15}
	Seo, S.J.,  Slater, P.J.:
	Fault tolerant detectors for
	distinguishing sets in graphs.
	Discuss. Math. Graph Theory {\bf  35},  797--818 (2015)
	
	\bibitem{SBMM18}
	Simic, A.,  Bogdanovic, M.,  Maksimovic, Z.,   Milosevic, J.:
	Fault tolerant metric dimension problem: a new integer linear programming formulation and exact formulas for grid graphs.
	Kragujevac J. of Math.
	{\bf 42 (4)}, 495--503  (2018)
	
	\bibitem{S75}
    Slater, P.J.: Leaves of trees.
	Congr. Numer. {\bf 14}, 549--559  (1975)
	
	\bibitem{TFL21}
	Tillquist, R.C., Frongillo, R.M., Lladser, M.E.: Getting the lay of the land in discrete space: A survey of metric dimension and its applications. Preprint, arXiv:2104.07201  (2021)

	\bibitem{V17}
	Voronov,  R.V.:
	The fault-tolerant metric dimension of the King's graph.
	Vestnik S. Petersburg Univ. Ser. 10. Prikl. Mat. Inform. Prots. Upr. {\bf 13 (3)}, 241--249  (2017)
	
		\bibitem{YER-17}
		Yero, I.G., Estrada-Moreno, A.,  Rodr\'iguez-Vel\'azquez, J.A.:
		Computing the k-metric dimension of graphs.
		Appl. Math. Comput. {\bf 300},  60-69  (2017)
	
	
\end{thebibliography}
\end{document}